\documentclass{amsart}
\usepackage{amssymb, amsmath, amsfonts, amsthm, esint, mathtools}
\usepackage[dvipsnames,svgnames,table]{xcolor}
\usepackage[margin=1in]{geometry}
\usepackage[utf8]{inputenc}
\usepackage{cite}
\usepackage{comment}

\usepackage{tikz}
    \usetikzlibrary{calc, angles, patterns, quotes, arrows.meta}
    \usepackage{pgfplots}
    \pgfplotsset{compat=1.14}

\mathtoolsset{showonlyrefs}

\usepackage[colorlinks,linkcolor = blue,citecolor = ForestGreen]{hyperref}

\newcommand{\bb}{\mathfrak{b}}

\newcommand{\abs}[1]{\left\vert#1\right\vert}
\newcommand{\norm}[1]{\left\|#1\right\|}
\newcommand{\set}[1]{\left\{ #1 \right\}}
\newcommand{\brak}[1]{\left\langle #1 \right\rangle}
\newcommand{\vv}{\brak{v}}
\newcommand{\vvo}{\brak{v_0}}

\newcommand{\be}{\begin{equation}}
\newcommand{\ee}{\end{equation}}
\newcommand{\dd}{\, {\rm d}}
\newcommand{\eps}{\varepsilon}

\newcommand{\R}{\ensuremath{{\mathbb R}}}
\renewcommand{\S}{\mathbb S}
\newcommand{\T}{\ensuremath{{\mathbb T}}}
\DeclareMathOperator{\Id}{Id}
\DeclareMathOperator{\QL}{Q_{\rm L}}
\DeclareMathOperator{\QB}{Q_{\rm B}}
\DeclareMathOperator{\Q}{Q}
\DeclareMathOperator{\Qs}{Q_{s}}
\DeclareMathOperator{\Qns}{Q_{ns}}

\newcommand{\uf}{{\underline f}}

\newcounter{num} \numberwithin{num}{section}

\newtheorem{theorem}[num]{Theorem}

\newtheorem{lemma}[num]{Lemma}
\newtheorem{corollary}[num]{Corollary}
\theoremstyle{definition}
\theoremstyle{remark}
\newtheorem{remark}[num]{Remark}
\numberwithin{equation}{section}

\author{William Golding, Christopher Henderson, and Luis Silvestre}
\title[Pointwise bounds]{Pointwise bounds and obstructions to blowup for the Landau and Boltzmann equations}
\address[William Golding]
    {Department of Mathematics, The University of Chicago,
    Chicago, IL 60615, USA}
    \email{wgolding@uchicago.edu}
\address[Christopher Henderson]
    {Department of Mathematics, The University of Maryland,
    College Park, MD 20742, USA}
    \email{ckhend@umd.edu}
\address[Luis Silvestre]
    {Department of Mathematics, The University of Chicago,
    Chicago, IL 60615, USA}
    \email{luis@math.uchicago.edu}
\thanks{\textbf{Acknowledgments:} The authors would like to thank Stanley Snelson for guidance on continuation results for the hard potentials case.  CH was supported by NSF grants DMS-2617615, 2337666, and 2204615. LS was supported by NSF grants DMS-2054888 and DMS-2350263.}

\begin{document}

\begin{abstract}
    We establish a new {\em a priori} estimate on solutions to the space-inhomogeneous Landau and Boltzmann equations. As a consequence, we prove a new continuation criterion, based on a weighted $L^\infty$-norm, without requiring bounds on the hydrodynamic quantities. This complements existing conditional regularity results from a rather different perspective.

    Consequently, we show that the singularities present in the fluid equations are largely incompatible with the Boltzmann and Landau equations. More precisely, we largely rule out ``lifting a singularity'' from the 3D Euler equations to the physical range of kinetic equations, a widely expected mechanism for singularity formation. Under general considerations, this mechanism is essentially excluded for soft potentials, whereas for hard potentials the situation is more nuanced: one cannot produce blowup through the standard hydrodynamic ansatz using {\em known} imploding solutions to the Euler equations.
\end{abstract}

\maketitle

\section{Introduction}

\subsection{Setting}\label{ss:setting}
We are concerned with the spatially inhomogeneous Landau and Boltzmann equations, written
\be\label{eq:kinetic}
    (\partial_t + v\cdot\nabla_x) f
        = {\rm Q}(f,f).
\ee
Here, $\Q$ may be either the Landau or Boltzmann collision operator. A major open problem in the analysis of kinetic equations is to determine whether or not solutions to~\eqref{eq:kinetic} can blow up in finite time; that is, do solutions remain smooth or can they develop singularities?

Despite recent progress, this problem remains open for all of the usual nonlinear collision operators $\Q$, and even a clear heuristic is absent.
If singularities do occur, it is important to understand their qualitative and quantitative structure and the conditions under which they form.
From this point of view, a natural first step is to establish continuation criteria that rule out the formation of singularities when certain simple quantities remain bounded, thereby reducing the problem to a question of whether more tractable norms blow up. 
We recall the following statement by John Nash \cite{nash1958continuity}, originally in the context of fluid equations, but which applies equally well to kinetic equations:
\begin{center}
\includegraphics[width=0.78\textwidth]{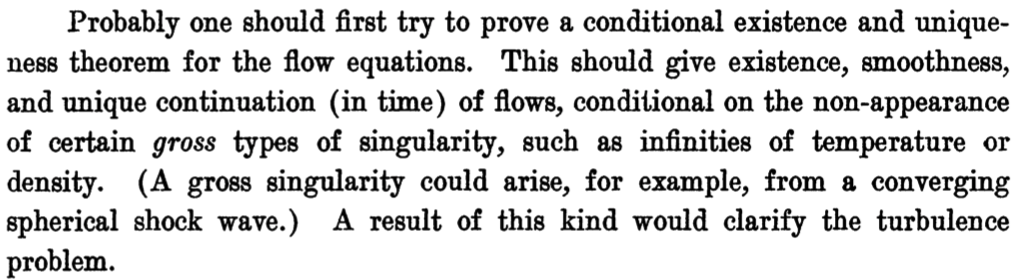}
\end{center}

In recent years, there has been substantial progress on conditional regularity for the Landau and non-cutoff Boltzmann equations. For moderately soft potentials, singularities cannot form as long as the mass and energy densities are bounded~\cite{HendersonSnelson,imbert2022global,imbert2020regularity,HendersonSnelsonTarfulea1,henderson2020self}; the only scenario not yet ruled out is concentration of mass or energy in space (see the discussion in \cite[Section 1.3]{imbert2020regularity}).

Heuristically, this suggests that any singularity formation at the kinetic level should already be visible in the associated fluid equations. Indeed, the hydrodynamic quantities associated with~\eqref{eq:kinetic} approximately solve the compressible Euler equations in an appropriate scaling limit (see, e.g.,~\cite{BardosGolseLevermore1,BardosGolseLevermore2,Golse_hydrodynamic}). However, the compressible Euler equations admit a wide range of singular behaviors. It is therefore natural to ask which of these singularities can manifest in the kinetic description, and how. 

For example, during shock formation, the mass and energy densities remain bounded while their gradients undergo blowup; existing continuation criteria therefore rule out shocks as a source of singularities at the kinetic level. More fundamentally, shock formation corresponds to the breakdown of a single-valued velocity field, whereas the kinetic description naturally allows multiple velocities to coexist at a given point in space-time. This provides a structural reason why shocks do not produce singularities at the kinetic level.

No analogous structural obstruction is known for implosions. Indeed, in light of a recent construction of singular solutions to the Landau equation in \cite{bedrossian2026finite}, any incompatibility with the kinetic dynamics must be more subtle. In this work, we prove a rather different continuation criterion (Theorem \ref{thm:main_propagation} below) and argue that it imposes severe structural obstructions on \emph{hydrodynamic} implosions, namely implosions arising from the connection between \eqref{eq:kinetic} and the Euler equations.

\medskip
\noindent\textbf{The hydrodynamic blowup scenario.}

\smallskip

A classical construction by Guderley~\cite{Guderley} demonstrated self-similar imploding solutions to the compressible Euler system based on a converging shock wave (see also~\cite{JangLiuSchrecker,JangLiuSchrecker2}). More recently, Merle, Rapha\"el, Rodnianski, and Szeftel~\cite{MerleRaphaelRodnianskiSzeftel} constructed smooth self-similar imploding solutions, relying on geometric focusing mechanisms present in the Euler equations. Subsequent constructions have extended the range of similarity exponents and clarified the essential features of these solutions \cite{BuckmasterCaoGomez,cao2025non,buckmaster2023smooth}.

A natural idea is to use such imploding Euler flows to construct a blowup scenario for the kinetic equation~\eqref{eq:kinetic}. This amounts
to constructing a solution whose macroscopic quantities remain close to those of the imploding Euler solution. In this regime, the microscopic velocity profile, to leading order, is the local Maxwellian corresponding to the Euler flow. In other words, the series expansion for the Boltzmann or Landau equation (as in Caflisch's work~\cite{Caflisch}) involves a leading order term explicitly determined by the underlying solution to the Euler equations. Smooth imploding solutions with nonzero density and pressure provide a leading order term that is smooth and non-vanishing, making them the most natural candidates for this approach.

This program has been recently carried out successfully to produce an implosion-type singularity for the Landau equation by Bedrossian et al.~\cite{bedrossian2026finite} for a ``very hard'' potential $\gamma > \sqrt{3}$, outside the physical range of power-law particle systems. 
A notable characteristic of this construction is that it produces solutions that remain globally bounded in $L^\infty$. The singularity forms as a result of \emph{tail fattening}; that is, the solution develops a large velocity tail around some point in space, causing blowup in the mass and energy densities.

In this paper, we identify a severe obstruction to this \emph{tail fattening} mechanism for physical potentials by propagating decay in velocity assuming only minimal weighted $L^\infty$-type control. Our result becomes stronger and more restrictive for softer potentials and strongest in the case of the Coulomb potential. In particular, an implosion singularity of the type constructed in~\cite{bedrossian2026finite} cannot arise in the physical range from any currently known smooth Euler implosion profile. The threshold $\gamma > \sqrt3$ appearing in~\cite{bedrossian2026finite}
therefore reflects a structural limitation of the hydrodynamic implosion mechanism, not merely a technical artifact of that construction.

These obstructions suggest that, in the physical range, any finite-time singularity would have to arise from a mechanism substantially different from the known \emph{hydrodynamic} implosion scenarios.
This leaves open the possibility of a \emph{genuinely kinetic} implosion mechanism, but rules out the most direct route from known Euler implosions.

\subsection{Main results}

\subsubsection{The mathematical setting}

We recall the standard formulations for the Landau and Boltzmann collision operators.
The Landau collision operator is given by
\be\label{eq:Landau}
    \QL(g,f)
    \coloneqq \nabla_v \cdot \int_{\R^d} \abs{v-w}^{2+\gamma} \Pi(v-w)\left[g(w)\nabla_v f(v) - f(v)\nabla_w g(w) \right] \dd w.
\ee
Here $\Pi_{ij}(z) = \delta_{ij} - \hat z_i \hat z_j$ is the projection matrix onto the plane orthogonal to $z$. The parameter $\gamma$ is in the range $\gamma \in [-d,1]$.  The case $\gamma = -3$, with $d=3$, corresponds to the Landau-Coulomb operator in three dimensions.  We consider $d \geq 2$.  In the special case $d=2$, we assume $\gamma \in (-2,1]$.  This is explained in greater detail in Section \ref{sec:Landau_prelim}.

The Boltzmann collision operator is given by
\be\label{eq:Boltzmann}
    \QB(g,f)
    	\coloneqq \int_{\R^d}\int_{\S^{d-1}} B(v-v_*, \sigma) \left[f(v')g(v_*')  - f(v) g(v_*)\right] \dd \sigma \dd v_*,
\ee
where $v'$ and $v_*'$ are defined through the relations
\be\label{e.v'_v_*'_sigma}
	v' = \frac{v + v_*}{2} + \frac{|v-v_*|}{2} \sigma
	\quad\text{ and }\quad
	v_*' = \frac{v + v_*}{2} - \frac{|v-v_*|}{2} \sigma.
\ee
The collision kernel $B\not\equiv0$ is a nonnegative function satisfying, for $\gamma > -d$,
\be\label{e.collision_kernel}
	B(w, \sigma) = |w|^\gamma \ b(\sin \theta/2)
	\qquad\text{ where } \cos \theta = \sigma \cdot \frac{w}{|w|}
	\quad\text{ and }\quad
    \int_{\S^{d-1}} |\sin(\theta/2)|^2 \ b(\sin \theta/2) \dd\sigma < \infty,
\ee
where $b(\sin\theta/2)$ is an arbitrary even function of the deviation angle $\theta$.
We stress that we {\em only} assume~\eqref{e.collision_kernel} for our main {\em a priori} estimates that we prove in this paper.  In particular, we consider {\em both} cutoff and non-cutoff Boltzmann collision kernels simultaneously.

\subsubsection{The a priori estimate}
The weighted spaces $L^\infty_m$ are defined in Section \ref{ss.notation} and contain bounded functions with pointwise decay like $|v|^{-m}$. The precise notion of solution and comments regarding it are contained in Section \ref{sec:preliminaries}. We are ready to state our main result:

\begin{theorem}\label{thm:main_propagation}
Let $\Q$ be either the Landau or Boltzmann collision operator; $m > m_0$, for $m_0$ sufficiently large depending on $\Q$;
and $f\colon[0,T]\times \Omega \times \R^d \to \R^+$ be a solution to~\eqref{eq:kinetic} with initial data $f_{\rm in} \in L^\infty_m$.
 Then,
\begin{equation}\label{e.a_priori}
    \|f(t)\|_{L^\infty_m}
    \le
        \norm{f_{\rm in}}_{L^\infty_m}
        \exp\left(
            C \int_0^t \norm{f(s)}_{L^\infty_{d+\gamma}} \dd s
            \right).
\end{equation}
Here, $C$ is a constant that depends on $m$ and $\Q$, and the spatial domain $\Omega$ is either $\T^d$ or $\R^d$.
\end{theorem}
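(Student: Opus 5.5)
The estimate will come from a barrier (maximum principle) argument. The transport part $\partial_t + v\cdot\nabla_x$ contributes nothing to pointwise bounds, so we only need to control the collision operator applied to a weighted barrier. Fix $m>m_0$, set $A(t) = \norm{f_{\rm in}}_{L^\infty_m}\exp\bigl(C\int_0^t \norm{f(s)}_{L^\infty_{d+\gamma}}\dd s\bigr)$, and take the barrier $g(t,v) = A(t)\vv^{-m}$, which is independent of $x$. We show that $f< (1+\eps) g$ persists for all $t$, then let $\eps\to 0$. If the inequality fails, there is a first time $t_0$ and a point $(x_0,v_0)$ where $f$ touches $(1+\eps)g$ from below. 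On the torus this point exists by compactness. On $\R^d$, and for large $|v|$, we use the decay built into the notion of solution, or replace $m$ by a slightly smaller weight first. At the touching point,
\[
\partial_t f \ge (1+\eps)\partial_t g,\qquad \nabla_x f = 0,
\]
and $f(t_0,x_0,\cdot)-(1+\eps)g$ attains a maximum (equal to zero) in $v$ at $v_0$. The equation then forces $(1+\eps)A'(t_0)\vvo^{-m} \le \Q(f,f)(t_0,x_0,v_0)$. The task is therefore the one-sided bound
\[
\Q(f,f)(v_0) \le C\,\norm{f}_{L^\infty_{d+\gamma}}\, f(v_0)
\]
at such a touching point. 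This contradicts $A' = C\norm{f}_{L^\infty_{d+\gamma}}A$ once the constant in $A$ is chosen slightly larger.

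\textbf{Landau.} Write $\QL(f,f) = \bar a^f_{ij}\partial_{ij} f + \bar c^f f$ in non-divergence form. Here $\bar a^f = a_\gamma * f$, and $\bar c^f = -c_\gamma * f$ with $c_\gamma\ge0$ a multiple of $|z|^\gamma$; for $\gamma=-d$ it is a Dirac mass, so $\bar c^f = -cf \le 0$. At the touching point the Hessian comparison gives
\[
\bar a^f_{ij}\partial_{ij}f \le (1+\eps)A\,\bar a^f_{ij}\partial_{ij}\vv^{-m}.
\]
Next, $\partial_{ij}\vv^{-m} = m\vv^{-m-2}\bigl((m+2)\frac{v_iv_j}{\vv^2}-\delta_{ij}\bigr)$. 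For large $m$, the good term is the isotropic part $-m\vv^{-m-2}\operatorname{tr}\bar a^f$ and the bad term is the radial part $m(m+2)\vv^{-m-4}\,\bar a^f v\cdot v$. The key structural fact is the projection $\Pi$: for the contribution of $w$ far from $v$, the eigenvalue of $\bar a^f$ in the direction $v$ is smaller by a factor $\vv^{-2}$, roughly $\bar a^f v\cdot v \lesssim \vv^{\gamma}\cdot(\text{moments})$ versus $\operatorname{tr}\bar a^f \sim \vv^{2+\gamma}$. The bad part that does not cancel is then estimated by splitting $w$ near $v$ (within $|v|/2$) versus far. Near $v$ we use $f(w)\le A\brak{w}^{-m}\approx A\vv^{-m}$. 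Far from $v$ we use $f\le \norm{f}_{L^\infty_{d+\gamma}}\brak{w}^{-d-\gamma}$, so the convolution integrals converge with a loss of at most a logarithm, absorbed by the good term. The weight $d+\gamma$ is exactly what makes $\int |v-w|^{\gamma}f(w)\dd w$ and the analogous $|v-w|^{2+\gamma}$ pieces controlled by $\norm{f}_{L^\infty_{d+\gamma}}$ times powers of $\vv$ matching $\vv^{-m}$ after multiplication. We must also bound the positive part of $\bar c^f$, which is zero since $c_\gamma\ge0$. For the needed sign, I expect to rely on the uniform-in-$f$ nondegeneracy statements in Section~\ref{sec:Landau_prelim}.

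\textbf{Boltzmann.} Split $\QB(f,f) = \Qs + \Qns$, with $\Qs f(v) = \int (f(v')-f(v))K_f(v,v')\dd v'$ and $\Qns = c\,(|\cdot|^\gamma * f) f$. Because $v_0$ is a maximum point, $\Qs$ is bounded above by the same kernel acting on the barrier. This is estimated using the standard Carleman representation, $K_f(v,v')\approx |v-v'|^{-d-2s}\int_{w\perp v'-v} f(v+w)|w|^{\gamma+2s+1}\dd w$. The cancellation lemma controls $\Qns$ by $\norm{f}_{L^\infty_{d+\gamma}}f(v_0)$, with convergence because $\int\brak{w}^{-d-\gamma}|v-w|^\gamma \dd w$ is at most a multiple of $\vv^\gamma\log\vv$. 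Since only \eqref{e.collision_kernel} is assumed, I would avoid the Carleman route and instead work directly in $\sigma$-coordinates. Using the maximum, $f(v')g(v_*') - f(v)g(v_*)$ is bounded by $(1+\eps)A$ times the corresponding expression with $\brak{v'}^{-m}$. A second-order Taylor expansion of $\brak{v'}^{-m}$ around $v$ exploits $\int \sin^2(\theta/2) b<\infty$. For large $m$, the gain from collisions that move mass to higher energy, a Povzner-type inequality where $\brak{v'}^{-m}+\brak{v'_*}^{-m}$ is small, dominates.

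\textbf{Main obstacle.} I expect the hardest step to be obtaining the one-sided bound $\Q(f,f)(v_0)\le C\norm{f}_{L^\infty_{d+\gamma}}f(v_0)$ with only the $L^\infty_{d+\gamma}$ norm on the right. This means there is no mass, energy, or $A$ itself appearing multiplicatively. It is precisely what makes the Grönwall-type conclusion \eqref{e.a_priori} linear. It requires showing that every contribution involving $f$ near $v_0$ with weight $A\vv^{-m}$ comes with a negative sign for $m>m_0$. For Landau this is the $\Pi$-degeneracy argument. For Boltzmann it is the Povzner gain, which must hold uniformly in the angular singularity.
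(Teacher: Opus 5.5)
Your barrier framework is the paper's, and your target inequality $\Q(f,f)(v_0)\le C\norm{f}_{L^\infty_{d+\gamma}}f(v_0)$ at a first contact point is exactly \eqref{eq:main_lemma_consequence}. The gap is in how you propose to prove it. You bound $f(w)$ away from $v_0$ by $\norm{f}_{L^\infty_{d+\gamma}}\brak{w}^{-d-\gamma}$ and claim the convolutions converge up to a logarithm. They do not converge at all. For large $\abs{w}$ the integrands behave like $\abs{w}^{2-d}$ and $\abs{w}^{-d}$, so $\int\abs{v_0-w}^{2+\gamma}\brak{w}^{-d-\gamma}\dd w$ diverges like $R^2$ and $\int\abs{v_0-w}^{\gamma}\brak{w}^{-d-\gamma}\dd w$ diverges like $\log R$. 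The same failure hits your $\Qns$ bound (the claimed $\vv^{\gamma}\log\vv$ is false) and the Carleman kernel. Decay $\brak{w}^{-d-\gamma}$ does not even make $\bar a[f]$ finite, which is why the paper stresses that, for Coulomb, the control quantity imposes no decay. Also, the $\Pi$-degeneracy you invoke holds only for $\abs{w}\ll\abs{v_0}$, not for $\abs{w}\gg\abs{v_0}$.

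The missing idea is that $\norm{f}_{L^\infty_{d+\gamma}}$ should never enter the integrals. Using only $f\le\bb$, prove the scaling-sharp bound $\Q(f,f)(v_0)\le C\bb(v_0)^2\vvo^{d+\gamma}$. Only afterwards use the contact $f(v_0)=\bb(v_0)$ to write $\bb(v_0)\vvo^{d+\gamma}\le\norm{f}_{L^\infty_{d+\gamma}}$. So the ``main obstacle'' you name is misdiagnosed: $A$ is allowed to appear, because it only appears through $A\vvo^{-m}=f(v_0)$. On $\set{\abs{w}\ge\delta\abs{v_0}}$, the barrier with $m>d+\gamma+2$ gives exactly $A^2\abs{v_0}^{-2m+d+\gamma}$ by the scaling \eqref{eq:scaling}.

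The real difficulty is the region $\abs{w}<\delta\abs{v_0}$. There $f(w)$ may be of size $A$, so any barrier-only bound is off by a large power of $\vvo$, and you need the contribution to be pointwise nonpositive in $w$.

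For Landau, your sign is wrong: $\bar c[f]=-\partial_{ij}\bar a_{ij}[f]=(d-1)(d+\gamma)\abs{\cdot}^{\gamma}*f\ge0$, and it equals $(d-1)\abs{\S^{d-1}}f$ when $\gamma=-d$, i.e.\ the $+8\pi f^2$ term. So $\bar c[f]f$ is a source term, not a damping term. If you bound it on the small ball with $\norm{f}_{L^\infty_{d+\gamma}}$, you lose a factor $\abs{v_0}^{\gamma}$ (or $\log\abs{v_0}$) when $\gamma\ge0$. What actually works is spending the isotropic part of the diffusion on it. After the Hessian comparison, the $w$-integrand is $\bb(v_0)\abs{v_0-w}^{\gamma}\bigl[m\abs{v_0-w}^2\abs{v_0}^{-2}\bigl((m+2)\Pi(v_0-w)\hat v_0\cdot\hat v_0-(d-1)\bigr)+(d-1)(d+\gamma)\bigr]$. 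At $w=0$ this equals $(d-1)(d+\gamma-m)\bb(v_0)\abs{v_0}^{\gamma}$, because $\Pi(v_0)v_0=0$. It is therefore negative for $m>d+\gamma$, and stays negative for $\abs{w}<\delta\abs{v_0}$ by continuity.

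For Boltzmann, your Povzner/Taylor sketch does not produce this sign. The paper proceeds as follows.
\begin{itemize}
\item It writes $\QB(f,f)(v_0)-\QB(\uf,\uf)(v_0)$ exactly as the loss term with $v_*$ small, plus the gain terms with $v'$ small or with $v_*'$ small.
\item It moves all three onto the common hyperplane $v_0+(w-v_0)^{\perp}$ using Carleman and stereographic coordinates. This makes the grazing divergences of the loss term and of the $v_*'$-small gain term cancel in the non-cutoff case.
\item At $w=0$, the positive terms carry $\abs{z}^{-m}$ with $\abs{z}>1$ and vanish as $m\to\infty$ by monotone convergence, leaving a strictly negative integral.
\item Continuity in $w$ then gives the sign for $\abs{w}<\delta\abs{v_0}$.
\end{itemize}
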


Theorem \ref{thm:main_propagation} says that polynomial decay in velocity propagates as long as \(\|f(t)\|_{L^\infty_{d+\gamma}}\) is integrable in time.
In particular, if the initial data is rapidly decaying in velocity, then this decay persists for as long as the right hand side of~\eqref{e.a_priori} remains finite. Applying Theorem \ref{thm:main_propagation} with $m > \max(m_0,d + 2)$ controls the mass and energy densities:
\begin{equation}\label{eq:mass_energy}
    \rho(t,x) \coloneqq \int_{\R^d} f(t,x,v) \dd v
    \qquad\text{ and }\qquad
    \mathcal{E}(t,x) \coloneqq \int_{\R^d} \abs{v}^2 f(t,x,v) \dd v.
\end{equation}
Consequently, no implosions can occur while $\|f(t)\|_{L^\infty_{d+\gamma}}$ is integrable in time. Additionally, once the mass and energy densities are controlled, existing conditional regularity theory applies, and no singularity formation can occur in higher order norms (see Theorem \ref{thm:main_continuation} below). The subtle question remains: How restrictive is the condition on the $L^1_tL^\infty_{d+\gamma}$-norm?

The answer is counterintuitive and should be read with the hydrodynamic limit in mind.

\medskip
\noindent\textbf{Decay versus integrability.}

\smallskip

Contrary to prior continuation results, the strength of Theorem \ref{thm:main_propagation} lies in the decay requirements in $v$, \emph{not} in integrability requirements in $v$. The hydrodynamic quantities appearing in \eqref{eq:mass_energy} are $L^1$-type moments; if measured in terms of pointwise decay, controlling the energy density roughly requires decay of order $\abs{v}^{-d-2}$ as $\abs{v} \to +\infty$. By contrast, finiteness of the $L^\infty_{d + \gamma}$-norm imposes pointwise decay of order $\abs{v}^{-d - \gamma}$ as $\abs{v}\to +\infty$. Thus, as a decay condition, the $L^\infty_{d+\gamma}$-norm can be substantially weaker than control over the energy density, especially as $\gamma$ becomes more negative.

As discussed above, our unusual perspective here comes from the connection to the Euler equations. The lifting procedure connecting~\eqref{eq:kinetic} with the Euler equations yields bounded solutions. In our setting, the most important feature of a norm is in how much decay it requires as $|v| \to +\infty$.

To illustrate, we consider the Coulomb case $\gamma = -d = -3$. Theorem \ref{thm:main_propagation} implies that velocity decay of any order is propagated assuming only an \emph{unweighted} $L^\infty$-bound on $f$, which strikingly imposes no decay requirements on $f$. This exactly prohibits the \emph{tail fattening} singularity formation scenario discussed above.

As a further illustration, note that the collision operators in \eqref{eq:Landau} and \eqref{eq:Boltzmann} are nonlocal in $v$; to make sense of $Q(f,f)$ requires some decay of $f$. However, in the Coulomb case, our control quantity imposes no decay; the qualitative decay necessary to make sense of $Q$ is propagated from the initial data. In particular, one cannot prove Theorem \ref{thm:main_propagation} by viewing \eqref{eq:Landau} as an elliptic operator with bounded coefficients. More precise structure of the operator is needed.
Given the importance of decay to even make sense of the collision operator, it is significant and surprising that we can use such minimal decay on $f$ to propagate fast decay at $|v| \to +\infty$.

\medskip
\noindent\textbf{Idea of the proof.}

\smallskip

The above discussion hints at the fundamental difficulty in proving Theorem \ref{thm:main_propagation}: the nonlocality of the collision operator. Nonlocal effects can unavoidably destroy decay properties: For instance, $\Delta^{-1}u$ need not decay arbitrarily fast in $\R^d$,
even if $u$ is compactly supported. Similar convolutions occur in the Landau operator \eqref{eq:Landau} and even stronger nonlocal effects are present in the Boltzmann operator \eqref{eq:Boltzmann}.

The proof of Theorem \ref{thm:main_propagation} is a nonlinear barrier argument carried out in Section \ref{sec:propagation}, based on an intricate estimate of $Q(f,f)$ at any contact point with a polynomial barrier. The main observation is that at such a contact point, the potentially fatal nonlocal contributions are not arbitrary, but rather have a good sign once properly isolated. Accounting for all of the nonlocal contributions is the main difficulty in proving Theorem \ref{thm:main_propagation}. This is achieved by a decomposition for both the Landau and Boltzmann operators in Section \ref{sec:local_nonlocal_collisions}. After the worst nonlocal terms have a sign, the decay of the remaining terms can be computed by scaling.

In the Landau case, the proof gives the simple explicit threshold $m_0=2+d+\gamma$. The Boltzmann case is more complicated; although $m_0$
can in principle be extracted from our argument, its value depends strongly on structural assumptions on the angular kernel $b(\sin\theta/2)$.

\subsubsection{Alternative lower bound on blowup rates}

    The \emph{a priori} estimate contained in Theorem \ref{thm:main_propagation} can be interpreted as a lower bound on the blowup rate of the $L^\infty_{d + \gamma}$-norm during singularity formation.
    We are also able to produce an \emph{alternative} lower bound:
    \begin{corollary}\label{cor:lower_bound_blowup_rate}
       Let $\Q$ be either the Landau or Boltzmann collision operator. Suppose $f\colon[0,T)\times \Omega \times \R^d \to \R^+$ is a solution to~\eqref{eq:kinetic} with initial data $f_{\rm in} \in L^\infty_{m}$ for $m > m_0$. 
    Then, 
    \begin{equation}
        \mathrm{if} \quad\limsup_{t \to T^-} \norm{f(t)}_{L^\infty} = +\infty, \qquad \mathrm{then}\quad \norm{f(t)}_{L^\infty_m} \ge \frac{1}{C(T-t)}
\end{equation}
for all $t\in [0,T)$. 
Here, $C$ and $m_0$ are the same constants as in Theorem \ref{thm:main_propagation}, and the spatial domain $\Omega$ is either $\T^d$ or $\R^d$.
    \end{corollary}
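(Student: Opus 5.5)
The plan is to turn Theorem \ref{thm:main_propagation} into a differential inequality and integrate it backward from the blowup time. Set $Y(t) \coloneqq \norm{f(t)}_{L^\infty_m}$.

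\emph{Step 1: a quadratic differential inequality.} Since $m > m_0 \ge d+\gamma$, we have the trivial bound $\norm{f(s)}_{L^\infty_{d+\gamma}} \le \norm{f(s)}_{L^\infty_m} = Y(s)$. The estimate \eqref{e.a_priori} is proved in Section~\ref{sec:propagation} by a barrier argument, and that argument can be restarted from any initial time $t_0 \in [0,T)$. So for $t_0 \le t < T$,
\begin{equation}
    Y(t) \le Y(t_0) \exp\left( C \int_{t_0}^t Y(s) \dd s \right).
\end{equation}
Set $Z(t) \coloneqq Y(t_0)\exp\bigl(C\int_{t_0}^t Y\bigr)$. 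Then $Z' = C Y Z \le C Z^2$ and $Z(t_0) = Y(t_0)$, so $(1/Z)' \ge -C$. This gives $1/Z(t) \ge 1/Y(t_0) - C(t-t_0)$.

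\emph{Step 2: blowup of $Y$ on the whole interval.} Suppose some $t_0$ had $Y(t_0) < 1/(C(T-t_0))$. Then $1/Y(t_0) - C(t-t_0) \ge 1/Y(t_0) - C(T-t_0) > 0$ for all $t \in [t_0,T)$. Hence $Z$, and therefore $Y$, stays bounded on $[t_0,T)$ by $\bigl(1/Y(t_0) - C(T-t_0)\bigr)^{-1}$. Since $\norm{f}_{L^\infty} \le \norm{f}_{L^\infty_m}$, this contradicts $\limsup_{t\to T^-}\norm{f(t)}_{L^\infty} = +\infty$. Therefore $Y(t) \ge 1/(C(T-t))$ for every $t \in [0,T)$, with the same $C$ as in the theorem. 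For $t_0 = 0$, the hypothesis $f_{\rm in} \in L^\infty_m$ ensures $Y(0) < \infty$. More generally we may assume $Y(t_0) < \infty$, since otherwise the lower bound at $t_0$ holds trivially.

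\emph{Main obstacle.} The one point needing care is applying Theorem \ref{thm:main_propagation} from an intermediate time $t_0$ rather than from $0$. We must check that $f(t_0,\cdot)$ is admissible initial data in the paper's solution class (Section~\ref{sec:preliminaries}). It is, because the equation is autonomous and time translation preserves that class. We should also confirm that $C$ depends only on $m$ and $\Q$, not on $T$ or on $f_{\rm in}$; this is how the theorem is stated. A smaller technicality is that $Y$ may not be continuous. The argument above avoids differentiating $Y$ itself and works only with the absolutely continuous function $Z$, provided $Y$ is measurable and locally integrable on $[t_0,T)$. If local integrability is in doubt, the $L^\infty_m$ bounds coming from the solution class supply it.
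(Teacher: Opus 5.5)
Your proposal is correct and follows the paper's proof. Both arguments restart Theorem \ref{thm:main_propagation} at an intermediate time by time-translation invariance, bound $\|f(s)\|_{L^\infty_{d+\gamma}}$ by $y(s)=\|f(s)\|_{L^\infty_m}$, and integrate the resulting Riccati inequality up to the blowup time. The only differences are minor improvements in rigor. The paper differentiates $y$ directly through a difference quotient, which implicitly needs continuity of $y$ from the qualitative regularity of $f$; your comparison with the absolutely continuous majorant $Z$ avoids this. You also state explicitly the step $\|f\|_{L^\infty}\le\|f\|_{L^\infty_m}$ that transfers the blowup hypothesis to $y$.
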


    Corollary \ref{cor:lower_bound_blowup_rate} implies that sufficiently strong weighted $L^\infty$-norms cannot blow up more slowly than $(T-t)^{-1}$ during singularity formation at time $T$, for either Boltzmann or Landau; the power is determined solely by the quadratic structure of the collision operators. Because $(T-t)^{-1}$ is non-integrable, this is a strictly faster rate than that guaranteed by Theorem \ref{thm:main_propagation}. However, Theorem \ref{thm:main_propagation} applies to norms with much less stringent decay, i.e. $\norm{\cdot}_{L^\infty_{d+\gamma}}$ as opposed to $\norm{\cdot}_{L^\infty_{m_0}}$. 

    Corollary \ref{cor:lower_bound_blowup_rate} is clearest in the case of Coulomb potential ($-\gamma = d = 3$). In this case, \eqref{eq:Landau} takes the form 
    \begin{equation*}
        \partial_t f = \bar a_{ij} \partial_{ij} f + 8\pi f^2,
    \end{equation*}
    where $\bar{a}_{ij}$ is a diffusion coefficient.
    The unweighted norm $\norm{f(t)}_{L^\infty}$ is a subsolution to a Riccati equation; solving the ODE, the comparison principle implies $\norm{f(t)}_{L^\infty}$ cannot grow slower than exactly $[8\pi (T-t)]^{-1}$.
    For the other collision operators, Theorem \ref{thm:main_propagation} implies $\norm{f(t)}_{L^\infty_{m}}$ is a subsolution to the same Riccati equation with a different constant.
    
    Lastly, we are unaware of the above phrasing of Corollary \ref{cor:lower_bound_blowup_rate} anywhere in the literature. We remark that, for \emph{soft potentials} ($\gamma < 0$), it is essentially a consequence of \cite[Lemma 2.3]{HendersonSnelsonTarfulea} for Landau or \cite[Lemma 4.1]{henderson2025decay} for non-cutoff Boltzmann. For hard potentials, where moment estimates rarely close, Corollary \ref{cor:lower_bound_blowup_rate} is not a simple consequence of a prior result in the literature.

\subsubsection{The continuation criterion}

Theorem \ref{thm:main_propagation} is an a priori estimate that holds broadly. However, to turn it into a rigorous continuation theorem, one must choose a local theory for the collision operator at hand. This is awkward, especially for Boltzmann, because the available results vary substantially based on assumptions on initial data and between cutoff or non-cutoff and hard or soft potentials.
Rather than reprove or unify the fragmented local theory, we isolate an informal meta-theorem that arises from combining Theorem \ref{thm:main_propagation} with the existing regularity theory. Then, as one concrete example, we present  Theorem \ref{thm:main_continuation} which covers the usual collision operators with soft potentials.
\begin{quote}
\textbf{Informal Continuation Principle.} \em
Fix any choice of Landau or Boltzmann collision operator $\Q$. Suppose there exists a local-in-time existence or continuation result for~\eqref{eq:kinetic} with $\Q$ that requires only finiteness of a polynomially weighted Sobolev norm. Then the solution $f$ is classical and can be continued beyond $[0,T]$ if
\begin{equation}\label{e.continuation}
    \int_0^T \|f(t)\|_{L^\infty_{d+\gamma}} \dd t < \infty.
\end{equation}
\end{quote}

The point is simple. While the details vary, existing continuation results for the Landau and non-cutoff Boltzmann equations typically require control of sufficiently many derivatives and sufficiently many polynomial moments. Theorem \ref{thm:main_propagation} supplies the polynomial moments.
Once these moments are available, the existing regularity theory yields the remaining bounds needed to continue the solution. Thus the possible breakdown of a classical solution is reduced to the blowup of this one weighted $L^\infty$-norm.

We now give a concrete formulation for soft potentials  in three dimensions, where the required existence theory for polynomially weighted initial data is available. In the Boltzmann case, to obtain regularization estimates, we introduce the standard ``non-cutoff'' condition:
\begin{equation}\label{e.non-cutoff}
	b(\sin\theta/2) \approx \left|\sin \frac\theta2\right|^{ - (d-1) - 2s} \qquad \text{with} \quad s\in(0,1),
\end{equation}
which produces the smoothing effect used in the continuation result~\cite{imbert2020regularity}.
For inverse-power-law interactions in three dimensions, the parameters $s$ and $\gamma$ are related by $\gamma = 1 - 4s$, so the physical range corresponds to $-3\le \gamma \le 1$. The limiting case $s=1$ corresponds formally to the Landau collision operator.

\begin{theorem}\label{thm:main_continuation}
Let $\Q$ be either the Landau or the non-cutoff Boltzmann collision operator (recall~\eqref{e.non-cutoff}).  Assume that  $\gamma \in [-3,0)$ and, in the Boltzmann case, that $s\in (0,1)$.
Suppose $f\colon[0,T^*)\times \Omega \times \R^3 \to \R^+$ is a solution to~\eqref{eq:kinetic} with continuous initial data $f_{\rm in} \in L^\infty_m$ for $m$ sufficiently large.  Assume that $[0,T^*)$ is the maximal interval of existence. Then, either $T^* = \infty$ or
    \begin{equation*}
        \int_0^{T^*} \|f(t)\|_{L^\infty_{3+\gamma}} \dd t = \infty.
    \end{equation*}
Here, the spatial domain $\Omega$ is either $\T^3$ or $\R^3$.
\end{theorem}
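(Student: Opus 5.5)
We argue by contradiction, combining the \emph{a priori} estimate of Theorem~\ref{thm:main_propagation} with an existing continuation theorem for soft potentials. Suppose $T^*<\infty$ and
\[
\int_0^{T^*} \|f(t)\|_{L^\infty_{3+\gamma}} \dd t = K < \infty .
\]
Fix $m>m_0$, taken large enough that the local theory below applies. Theorem~\ref{thm:main_propagation} then gives, uniformly for $t\in[0,T^*)$,
\[
\|f(t)\|_{L^\infty_m} \le \|f_{\rm in}\|_{L^\infty_m}\, e^{CK}.
\]

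The first step is to convert this bound into control of the hydrodynamic quantities. Since $m>5$, we have for every $(t,x)$
\[
\rho(t,x)+\mathcal E(t,x) \le \|f(t)\|_{L^\infty_m}\int_{\R^3} \brak{v}^{2-m}\dd v \le C_m \|f_{\rm in}\|_{L^\infty_m}e^{CK}.
\]
Mass and energy densities are therefore bounded uniformly on $[0,T^*)\times\Omega$. The same holds for any polynomially weighted $L^\infty$ norm with weight below $m$.

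The second step concerns the lower bound on mass and the vacuum issue. Conditional regularity results (Henderson--Snelson--Tarfulea for Landau, Imbert--Mouhot--Silvestre for non-cutoff Boltzmann) typically also require a lower bound on mass density and a bound on entropy, or else positivity of the data. Two routes are available:
\begin{itemize}
\item[(i)] invoke versions valid without a mass lower bound; for soft potentials and continuous initial data, Henderson--Snelson--Tarfulea and Henderson--Snelson--Tarfulea's follow-up on self-generating lower bounds show that positivity spreads instantly, giving uniform lower bounds on $[\tau,T^*)$ for any $\tau>0$;
\item[(ii)] work on $[\tau, T^*)$ where pointwise lower bounds are already established.
\end{itemize}
I would take route (i). With the upper bounds from the first step, these papers yield $C^\infty$ estimates, with weighted polynomial decay, on $[\tau,T^*)\times\Omega\times\R^3$. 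The decay constants depend only on the bounds above and on $\tau$.

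The third step is to continue the solution. Choose $t_0$ close to $T^*$. By the previous steps, $f(t_0)$ lies in a polynomially weighted Sobolev or H\"older space, and its norm is bounded independently of $t_0$. The local existence theorem (Henderson--Snelson--Tarfulea for Landau with weighted $L^\infty$ data, Henderson--Snelson--Tarfulea/Morimoto et al.\ for Boltzmann) provides a time of existence depending only on that norm. This time is uniform in $t_0$, so starting sufficiently close to $T^*$ carries the solution past $T^*$. This contradicts maximality.

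The main obstacle is bookkeeping between different solution frameworks: the local theory must be stated in a weighted $L^\infty$ class compatible with Theorem~\ref{thm:main_propagation}, and uniqueness must hold so that the continued solution agrees with $f$. The case $\Omega=\R^3$ additionally requires that the cited lower-bound and regularity results handle possible vacuum at spatial infinity. I would address this by using local-in-space versions of the regularity estimates, which need only the uniform upper bounds from the first step.
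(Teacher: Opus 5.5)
Your proposal is correct and follows the paper's argument. Theorem~\ref{thm:main_propagation} turns finiteness of $\int_0^{T^*}\|f(t)\|_{L^\infty_{3+\gamma}}\dd t$ into a uniform $L^\infty_m$ bound on $[0,T^*)$, and the existing soft-potential theory then continues the solution. The paper does this in one line by citing item~(iii) of Theorems~\ref{t:local-well-posedness-Landau-soft} and~\ref{t:local-well-posedness-Boltzmann-soft}, which is already phrased in terms of a weighted $L^\infty$ norm. Your detour through hydrodynamic bounds, lower bounds, conditional regularity and a restart with uniform existence time therefore just re-derives those criteria, and the vacuum and uniqueness bookkeeping you flag is absorbed into the cited results.
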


Theorem \ref{thm:main_propagation} already prevents the implosion mechanism by itself: while the weighted $L^\infty$-norm remains integrable, velocity decay is propagated and mass or energy cannot concentrate through tail fattening. Theorem \ref{thm:main_continuation} unifies this estimate with the available local theory, simultaneously ensuring that no other singularity mechanism already covered by the existing continuation criteria can occur.

For the Landau equation, there is currently no local well-posedness for the hard potentials case $\gamma \in [0,1]$ using only polynomial weights.  We refer the reader to~\cite{Chaturvedi,SnelsonTaylor} for work in this direction but with (stretched) exponential weights. For the Boltzmann equation, a result was recently announced in~\cite{li2026local_hardpotentials}.

\begin{remark}
    Notice that the \emph{a priori} estimate in Theorem \ref{thm:main_propagation} yields an unconditional short time bound for $m > \max(m_0,d+\gamma)$ for any $\gamma$. Theorem \ref{thm:main_propagation} is stable under approximation, e.g. under truncation of the kernel in $\Q$ or added diffusion. Consequently, one could likely use a version of Theorem \ref{thm:main_propagation} to sidestep the ``moment closure problem'' for hard potentials, which leads to involved commutator estimates and towers of norms in ~\cite{Chaturvedi,li2026local_hardpotentials}. We do not investigate this direction further here.
\end{remark}

\subsection{Organization}
We begin this work in Section \ref{sec:Euler} with a discussion of the implications of Theorem \ref{thm:main_propagation} and Theorem \ref{thm:main_continuation} for using singularity formation in the Euler equations to construct blowup to \eqref{eq:kinetic}.
Afterwards, we recall some known results in Section \ref{sec:preliminaries}.  In Section \ref{sec:propagation}, we show how to deduce Theorem \ref{thm:main_propagation} from estimates on the collision operator (Lemma \ref{lem:main_lemma}). This is the crux of the work, and it is, in turn, broken down into lemmas regarding the contributions of collisions involving small and large velocities in Section \ref{sec:local_nonlocal_collisions}. To this point, the work is quite general, using nothing particular about the collision operators.  We specialize to the Landau case in Section \ref{sec:Landau_a_priori} and the Boltzmann case in Section \ref{sec:Boltzmann_a_priori}.  Finally, we prove Corollary \ref{cor:lower_bound_blowup_rate} and Theorem \ref{thm:main_continuation} in Section \ref{sec:continuation}.

\subsection{Notation and terminology}\label{ss.notation}
For any vector $v \in \R^d$, we use the following standard notation:
\begin{equation}
    \hat v = \frac{v}{|v|}
    \qquad\text{ and }\qquad
    \brak{v} = \sqrt{1 + |v|^2}.
\end{equation}
Weighted $L^p$-spaces are defined using the bracket via the norm
\begin{equation*}
    \norm{f}_{L^p_m}^p
    \coloneqq \norm{\brak{\cdot}^mf}_{L^p}^p
    = \int_{\R^d} \brak{v}^{mp} f^p \dd v,
\end{equation*}
with the standard modification when $p = \infty$. Whenever the domain of integration is omitted from an integral, the integral should be understood to be taken over the entire domain, i.e. $\Omega = \mathbb{T}^d$ or $\R^d$ if integrated with respect to $\dd x$ or $\R^d$ if integrated with respect to $\dd v$.

We use the notation $A \lesssim B$ if $A \leq C B$ for some universal constant $C$ that we allow to depend only on $d$, $\gamma$, $b$, and $m$.

\section{Relationship to state-of-the-art Euler blowup scenarios}\label{sec:Euler}

In this section, we show that the continuation criterion developed above severely restricts the pathway to blowup for \eqref{eq:kinetic} via \emph{hydrodynamic implosion}.\footnote{Recall that by a \emph{hydrodynamic implosion}, we mean precisely a singular solution to \eqref{eq:kinetic} where the leading order behavior is dictated by a local Maxwellian where the associated parameters correspond to an implosion singularity for the Euler equations.}
Using the {\em a priori} estimate in Theorem \ref{thm:main_propagation} and some basic theory of the Euler equations, we argue that:
\begin{itemize}

    \item ($\gamma = -3$) There are no hydrodynamic implosions.

    \smallskip

    \item ($\gamma < -1/3$) There are no \emph{radial, approximately self-similar} hydrodynamic implosions compatible with \emph{local} conservation of mass and energy.

    \smallskip

    \item ($\gamma < \sqrt{3}$) There are no \emph{known} implosions to the Euler equations that can be used to produce a hydrodynamic implosion. 

\end{itemize}
We emphasize that Theorem \ref{thm:main_propagation} is sufficiently sharp to identify the threshold $\gamma = \sqrt{3}$ appearing in \cite{bedrossian2026finite} as a fundamental, not technical, barrier for \emph{known implosions}.

First, we review preliminaries about the compressible Euler equations, culminating in \eqref{eq:prefactor_bound}, a crucial consequence of the transport of the specific entropy. Second, we introduce the hydrodynamic limit connecting the kinetic and macroscopic levels leading to the ansatz~\eqref{eq:ansatz} for potentially singular solutions to the Landau and Boltzmann equations. Third, we apply Theorem \ref{thm:main_propagation} to this ansatz, resulting in a case-by-case analysis.

\subsection{The compressible Euler equations}

The full, temperature dependent 3D Euler equations can be written as the following system of nonlinear conservation laws:
\begin{equation}\label{eq:Euler}
\begin{aligned}
    \partial_t \rho + \nabla_x \cdot (\rho u) &= 0\\
    \partial_t (\rho u) + \nabla_x \cdot (\rho u\otimes u +  pI) &= 0\\
    \partial_t (\rho E) + \nabla_x \cdot (\rho E u + pu) &= 0,
\end{aligned}
\end{equation}
where $\rho$ is the mass density, $u$ is the macroscopic velocity field, $p$ is the pressure, and $E$ is the total energy density. For a compressible fluid, additional constitutive relations are necessary to close the system. For a monatomic ideal gas, these relations take the following form in terms of $\theta$, the temperature: \footnote{In the fluid mechanics literature, it is substantially more common to work with the internal energy $e$ in place of the temperature. For a monatomic gas, $e = \frac{3}{2}\theta$ and \eqref{eq:pressure_law} is the familiar equation of state for a polytropic gas with adiabatic exponent $5/3$. Additionally, we work in units so that the Boltzmann constant is $1$.}
\begin{equation}\label{eq:pressure_law}
    p = \rho \theta \qquad \text{and} \qquad E = \frac{3\theta}{2} + \frac{\abs{u}^2}{2}.
\end{equation}
Additionally, we introduce the specific entropy $\bar S$:
\begin{equation}
    \bar S = \log\left(\frac{2\rho^{2/3}}{3\theta}\right).
\end{equation}
For admissible solutions of \eqref{eq:Euler}, $\bar S$ is a subsolution to both a divergence-form and pure transport equation:
\begin{equation}\label{eq:entropy}
    \partial_t \left(\rho \bar S\right) + \nabla_x \cdot \left( \rho u \bar S\right) \le 0 \qquad \text{and} \qquad \partial_t \bar S  + u \cdot \nabla_x \bar S \le 0.
\end{equation}
Equality holds in smooth regions of fluid flow, but the inequality can become strict when admissible discontinuities (e.g. shock waves) are present.
A simple application of the maximum principle implies
\begin{equation}\label{eq:maximum_principle_specific_entropy}
    \sup_{x \in \R^3} \bar S(t,x) \le \sup_{x \in \R^3} \bar S(0,x).
\end{equation}
As a consequence, we conclude that for $(\rho,u,\theta)$ any entropy admissible solution to \eqref{eq:Euler} on $[0,T_*)\times \R^3$,
\begin{equation}\label{eq:prefactor_bound}
    \rho^{2/3} \le C \theta, 
\end{equation}
where the constant $C$ depends only on the right hand side of~\eqref{eq:maximum_principle_specific_entropy}. This inequality is crucial in the sequel.

\begin{remark}
    The above computations leading to \eqref{eq:prefactor_bound} are rigorous provided $(\rho,u,\theta)$ is sufficiently regular on $[0,T_*)\times \R^3$ and avoids both $\rho = 0$ and $\theta = 0$. However, \eqref{eq:prefactor_bound} can also be extended to a substantially larger class of weak solutions to \eqref{eq:Euler} including solutions that are merely bounded before time $T_*$, but may contain vacuum or entropy-admissible shocks. The key point is \eqref{eq:entropy} persists as an inequality for admissible solutions. This can be justified on physical grounds using the second law of thermodynamics and is valid for vanishing viscosity limits of the Navier-Stokes equations. More importantly for us, the inequality is also valid for limits of kinetic equations as a consequence of the decay of the Boltzmann entropy (see \cite{BardosGolseLevermore1}).

    On the other hand,~\eqref{eq:prefactor_bound} can fail if the initial data contains ``cold gas'' regions, i.e. non-vacuum regions of zero temperature---equivalently zero pressure. However, \eqref{eq:prefactor_bound} precludes the formation of such regions.
\end{remark}

\subsection{The strongly collisional limit}\label{ss:strongly_collisional}

The usual Boltzmann and Landau kinetic equations are related to the system~\eqref{eq:Euler} via the strongly collisional limit. Work in this direction, dating back to Caflisch~\cite{Caflisch} (see also the work of Bardos, Golse, and Levermore~\cite{BardosGolseLevermore1,BardosGolseLevermore2}), investigates the limit of
\begin{equation}\label{eq:kinetic_collisional}
     (\partial_t + v \cdot \nabla_x) f_{\eps} = \frac{1}{\eps}\Q(f_{\eps},f_\eps),
\end{equation}
as $\eps\to0$. 
Formally one sees that the limit must be an element of the kernel of $\Q$, which, under very general assumptions on $\Q$, consists of {\em local Maxwellians}.  These are functions of the form:
\begin{equation}\label{eq:local_Maxwellian}
    M(t,x,v) \coloneqq \frac{\rho(t,x)}{(2\pi \theta(t,x))^{3/2}} \exp\left(-\frac{\abs{v - u(t,x)}^2}{2\theta(t,x)}\right).
\end{equation}
Observe that $M$ depends on $t$ and $x$ only through the mass density $\rho$, the macroscopic velocity $u$, and the temperature $\theta$.
Additionally, the triple $(\rho,u,\theta)$ can be recovered as moments of $M$ via the relations,
\begin{align*}
    \rho(t,x) &= \int_{\R^3} f(t,x,v) \dd v\\
    \rho u(t,x) &= \int_{\R^3} v f(t,x,v) \dd v, \\
    \rho \theta(t,x) &= \frac 13 \int_{\R^3} \abs{v-u(t,x)}^2 f(t,x,v) \dd v.
\end{align*}
A simple consequence of the kinetic equation \eqref{eq:kinetic_collisional} and elementary computations is that $(\rho,u,\theta)$ solve the compressible Euler equations~\eqref{eq:Euler} with the constitutive relations \eqref{eq:pressure_law}.

\begin{remark}
    Under mild assumptions on the collision operator $\Q(f)$, the kinetic equation \eqref{eq:kinetic_collisional} converges in the strongly collisional regime to the compressible Euler equations with adiabatic exponent $5/3$. In particular, the 3D Euler equations with other pressure laws---even another adiabatic exponent---are \emph{not} observed as limits of the standard Landau and Boltzmann equations.
\end{remark}

\begin{remark}
    The form of the local Maxwellian is degenerate in the ``cold gas'' regions where $\theta = 0$. Instead, one should replace $M(t,x,v)$ by the measure $\rho(t,x)\delta_{\set{v = u(t,x)}}$.
\end{remark}

The study of singularity formation for the compressible Euler equations is considerably more mature than the corresponding area for kinetic equations. In particular, it is known that \eqref{eq:Euler} admits a wide range of self-similar imploding solutions~\cite{Guderley,MerleRaphaelRodnianskiSzeftel,MerleRaphaelRodnianskiSzeftel2,buckmaster2023smooth,BuckmasterCaoGomez,shao2025blow,cao2025non,JangLiuSchrecker,JangLiuSchrecker2,JenssenTsikkou1}.
Given the connection between kinetic equations and the 3D Euler equations outlined in Section \ref{ss:strongly_collisional}, one might hope to ``lift a singularity'' from the Euler equations to the kinetic level. 
More precisely, we make the ansatz
\begin{equation}\label{eq:ansatz}
    f(t,x,v) = \underbrace{\frac{\rho(t,x)}{(2\pi\theta(t,x))^{3/2}} \exp\left(-\frac{\abs{v-u(t,x)}^2}{2\theta(t,x)}\right)}_{M(t,x,v)} \quad + \quad \text{Smaller Perturbation},
\end{equation}
where $(\rho,u,\theta)$ is a singular solution to \eqref{eq:Euler} and $M(t,x,v)$ is the corresponding local Maxwellian. We now apply Theorem \ref{thm:main_propagation} to solutions $f(t,x,v)$ of the Landau and Boltzmann equations of the form \eqref{eq:ansatz}.

\subsection{Hydrodynamic implosions: the Landau-Coulomb case}
Note that since $(\rho,u,\theta)$ is a solution to the Euler equations \eqref{eq:Euler}, the prefactor in front of the local Maxwellian remains bounded by \eqref{eq:prefactor_bound}. Consequently, the local Maxwellian remains bounded uniformly in time:
\begin{equation}
    \norm{M(t)}_{L^\infty_{x,v}} \le \sup_{x\in\R^3} \frac{\rho(t,x)}{(2\pi \theta(t,x))^{3/2}}\lesssim 1.
\end{equation}
Thus, as long as the perturbation in \eqref{eq:ansatz} is lower order, or $L^1(0,T;L^\infty)$, Theorem \ref{thm:main_continuation} implies no singularity formation can occur at the kinetic level.

Let us emphasize something notable: there are virtually no assumptions on the form of singularity to the 3D Euler equations above. In particular, we do not assume that the solution is isentropic, radial, or even self-similar. However, in order to be compatible with the ansatz~\eqref{eq:ansatz}, we must make the mild assumption that the temperature is non-zero in all non-vacuum regions, which is propagated by entropy-admissible solutions to the 3D Euler equations. Consequently, the approach of ``lifting a singularity'' from the Euler equations \emph{using the ansatz}~\eqref{eq:ansatz} seems entirely doomed for Landau-Coulomb, since the error must be leading order. Finding a more singular solution to the Euler equations can never directly resolve this issue.

\subsection{Hydrodynamic implosions: other collision operators}
For other Boltzmann or Landau collision operators with $\gamma > -3$, to apply Theorem \ref{thm:main_propagation}, we must compute a weighted norm of the local Maxwellian:
\begin{equation}
    \norm{M(t)}_{L^\infty_{3+\gamma}} \le C+ \sup_{(x,v)} \left[\abs{v}^{3+\gamma} \exp\left(-\frac{\abs{v-u(t,x)}^2}{2\theta(t,x)}\right)\right] \lesssim 1 + \sup_{x} \theta^{\frac{3 + \gamma}{2}} + \sup_x \abs{u}^{3+\gamma}.
\end{equation}
Again, assuming that the perturbation in \eqref{eq:ansatz} is lower order, Theorem \ref{thm:main_propagation} implies no blowup occurs at time $T_*$ at the kinetic level provided
\begin{equation}\label{eq:blowup_rate}
    \int_0^{T_*} \left(\norm{u(t)}_{L^\infty}^{3+\gamma} + \norm{\theta(t)}_{L^\infty}^{\frac{3 + \gamma}{2}}\right) \dd t  < \infty.
\end{equation}
The minimum blowup rate imposed by \eqref{eq:blowup_rate} is \emph{a necessary condition} for \eqref{eq:ansatz} to viably produce a singular solution at the kinetic level. We now study blowup profiles to the Euler equations and assess when \eqref{eq:blowup_rate} is satisfied.
Each of the known implosions to \eqref{eq:Euler} is either approximately self-similar or globally self-similar, which are, up to leading order, of the form
\begin{equation} \label{eq:self_similar_Euler}
\begin{aligned}
    \rho(t,x) &= (T_*-t)^{\frac\kappa\lambda} R\left(\frac{|x|}{(T_*-t)^{1/\lambda}}\right)\\
    u(t,x) &= (T_*-t)^{\frac 1 \lambda - 1} U\left(\frac{\abs{x}}{(T_*-t)^{1/\lambda}}\right),\\
    \theta(t,x) &= (T_*-t)^{2\left(\frac 1\lambda - 1\right)} \Theta\left(\frac{\abs{x}}{(T_*-t)^{1/\lambda}}\right),
\end{aligned}
\end{equation}
where $\kappa, \lambda$ are two similarity exponents and the self-similar profiles $R$, $U$, and $\Theta$ are merely bounded.
Additionally, by symmetry we assume in \eqref{eq:self_similar_Euler} that an implosion occurs at time $t = T_*$ and location $x = 0$. Note that $\rho$, and consequently $\kappa$, is irrelevant for verification of \eqref{eq:blowup_rate}. Indeed, substituting \eqref{eq:self_similar_Euler} into \eqref{eq:blowup_rate}, we find a necessary condition on the similarity exponent $\lambda$:
\begin{equation}\label{eq:condition_self_similar_euler}
    \int_0^{T_*} \left(\norm{u(t)}_{L^\infty}^{3 +\gamma} + \norm{\theta(t)}_{L^\infty}^{\frac{3+\gamma}{2}}\right) \dd t = +\infty \qquad \text{only if }\qquad (3 + \gamma)\left(\frac{1}{\lambda} - 1\right) \le -1.
\end{equation}

\noindent\textbf{Observation 1}: Larger values of $\lambda$, i.e. more singular solutions to the Euler equations, can be used for a larger range of collision operators indexed by $\gamma$. However, there is an inherent limit: taking $\lambda \to \infty$ in \eqref{eq:condition_self_similar_euler}, we see that the ansatz \eqref{eq:ansatz} with a self-similar implosion of the form \eqref{eq:self_similar_Euler} can only be expected to produce blowup for $\gamma > -2$.

\medskip

\noindent\textbf{Observation 2}: There is a better upper bound on $\lambda$. The ansatz \eqref{eq:ansatz}, \eqref{eq:self_similar_Euler} ought to be compatible with the usual \emph{a priori} estimates for kinetic equations, namely conservation of mass, momentum, and energy, which requires:
\begin{equation}
    \sup_{0 < t < T_*} \iint (1 + \abs{v}^2)f(t,x,v) \dd v \dd x  < \infty.
\end{equation}
Since the local Maxwellian $M(t,x,v)$ is leading order near the implosion, the singular solution to Euler must have finite total mass and energy in any ball surrounding the implosion point. Combined with the transport of the specific entropy \eqref{eq:prefactor_bound}, it is known that this minimal condition for physicality severely restricts the range of $\kappa$ and $\lambda$ (see \cite[Section 4]{Jenssen}):
\begin{equation}
    -3 < \kappa \le -3(\lambda -1) \qquad \text{and} \qquad 1 < \lambda < \frac{5 + \kappa}{2}.
\end{equation}
Consequently, decay of entropy and conservation of mass and energy force $1 < \lambda < 8/5$. Substituting this bound into \eqref{eq:blowup_rate}, we find that the ansatz \eqref{eq:ansatz} with a self-similar implosion of the form \eqref{eq:self_similar_Euler} can only be expected to produce blowup for $\gamma > -1/3$.

\begin{remark}
    The \emph{a priori} estimates of Serre using his theory of compensated integrability \cite{serre} seem to produce the weaker bound on the similarity exponent $\lambda < 9/5$. At present, we are unaware of other {\em a priori} estimates for the Euler equations that might further limit the similarity exponent $\lambda$.
\end{remark}

Consequently, for soft potentials with $\gamma < -1/3$, the approach of~\eqref{eq:ansatz} appears doomed as well. However, there is a caveat: if one finds unbounded self-similar profiles $R$, $\Theta$, and $U$ or constructs a finite time singularity for the Euler equations that is not approximately self-similar \eqref{eq:self_similar_Euler}, and this solution is compatible with the lifting procedure~\eqref{eq:ansatz}, the above approach is possible.

\subsection{Applications to known blowup profiles from Euler}

Let us analyze the blowup rates of the known implosion scenarios for the compressible Euler equations.  We proceed in order of decreasing regularity.

\begin{itemize}
    \item\textbf{Smooth implosions:} In light of the discussion above, the candidate implosion profiles with the fewest obvious obstructions are the smooth blowup profiles in \cite{buckmaster2023smooth,shao2025blow}. For these implosions, the achievable self-similarity exponents are $1 < \lambda < 3-\sqrt 3 \approx 1.268$. In that case, Theorem \ref{thm:main_propagation} prevents the construction of a singular solution of the form \eqref{eq:ansatz} whenever $\gamma < \sqrt 3$. This contains the entire physical range, and it exactly complements the construction in~\cite{bedrossian2026finite}, which requires $\gamma > \sqrt 3$.

    \medskip

    \item\textbf{Finite regularity implosions:}
    There is a significantly simpler construction of merely continuous solutions in \cite{JenssenTsikkou1} that additionally continues the solution as an outgoing reflected shock wave. The mechanism for blowup is the self-similar focusing of a continuous spherically symmetric wave similar to \cite{MerleRaphaelRodnianskiSzeftel,MerleRaphaelRodnianskiSzeftel2}. Explicitly computing the similarity exponent in \cite[Section 4]{JenssenTsikkou1}, once again one finds $1 < \lambda < 3-\sqrt{3}$.  This corresponds to the same range as above.

    \medskip

    \item\textbf{Collapsing cavity:} Another family of self-similar implosions were first described by Hunter in \cite{Hunter} to model a collapsing cavity (a comprehensive study via a modern similarity approach can be found in \cite{Lazarus}). In this scenario, a symmetric vacuum region is invaded by a converging rarefaction wave. The wave accelerates and collapses to a point (resp. axis) in finite time. All of the hydrodynamic quantities diverge at the time of collapse and the range for the similarity exponents are $\kappa =  -3(\lambda - 1)$ and $1 <\lambda < \frac{15-5\sqrt{2}}{7} \approx 1.133$ (for cylindrical symmetry) or $1 < \lambda < 3-\sqrt{3}$ (for spherical symmetry). Once again, this corresponds to the same range of similarity exponents.

    \medskip

    \item\textbf{Converging shock wave implosions:} In a final family of self-similar implosions---first described by Guderley in \cite{Guderley}---a region of cold gas (referred to as the quiescent region) is invaded by a symmetric shock wave. The discontinuity accelerates and collapses to a point (resp. axis) in finite time. In this scenario, the velocity field and temperature diverge at the time of collapse, whereas the density remains finite; in the notation above, $\kappa = 0$.
    It is known that the cold gas assumption is necessary to obtain an exact self-similar solution, and that in this case, there is a unique self-similarity exponent $\lambda \approx 1.45$ (spherical symmetry) or $\lambda \approx 1.226$ (cylindrical symmetry)~\cite[Table 6.2]{Lazarus}.
    Unlike the smooth implosions discussed above, the Guderley solution is fundamentally non-isentropic, and formally \eqref{eq:blowup_rate} indicates it blows up fast enough to leave open singularity formation at the kinetic level with $\gamma$ in the physical range.

    The exact Guderley scenario can form from smooth data \cite{CialdeaShkollerVicol}, but crucially only from data that has zero temperature on an open set. Due to the entropy inequality, such cold-gas regions cannot form dynamically from data with strictly positive temperature. The \emph{exact Guderley scenario} is therefore incompatible with the lifting procedure \eqref{eq:ansatz}; the local Maxwellian must be replaced by a Dirac mass in a cold gas region.

    However, we cannot rigorously rule out the construction of a smooth, positive-pressure approximation to the Guderley solution. Such a hypothetical scenario would face severe technical difficulties at both the hydrodynamic and kinetic level. Recently, in \cite{chen2026smoothstableeulerimplosions}, Chen, Shkoller, and Vicol construct smooth, rapid implosions---meaning faster than those of \cite{MerleRaphaelRodnianskiSzeftel,MerleRaphaelRodnianskiSzeftel2}---while removing much of the most singular cold-gas structure, replacing the zero-temperature quiescent region by a single point of vanishing temperature. However, this still falls short of the strictly positive-pressure regime required for a nonsingular local Maxwellian ansatz. The remaining difficulties are not merely cosmetic: positive temperature near the implosion produces a counterpressure resisting the converging flow. This resistance should intensify as the gas compresses, and could substantially slow down the implosion.
    Whether there exists a rapid 3D Euler implosion that has strictly positive pressure and remains compatible with the ansatz \eqref{eq:ansatz} is an interesting open question.

\end{itemize}

\section{Preliminaries}\label{sec:preliminaries}

In this section, we collect some results from the literature that are relevant for our work and discuss the notion of solution used in this paper.  This includes an overview of known techniques and results in order to explain the assumptions we make in our notion of solution.

\subsection{Rewriting the collision operators}

In the introduction, we stated the standard form of the collision operators that is most physically relevant. Here we state two other decompositions that are useful.

\subsubsection{Another form of the Landau collision operator}\label{sec:Landau_prelim}

In this case, the collisional form of the operator $\Q$ was defined in \eqref{eq:Landau}. 
There are several equivalent formulations of $\QL$ that emphasize particular aspects of the operator. We will mostly use the non-divergence elliptic form of $\QL$ obtained by expanding the divergence in \eqref{eq:Landau}:
\be\label{eq:Landau_alternative}
    \QL(f,f)
    	= \bar a_{ij} \partial_{ij} f + \bar c[f] f,
\ee
where
\begin{equation}\label{eq:defn_coeff}
\begin{aligned}
    \bar a_{ij}[f](v)
        &= \int_{\R^d} \abs{v-w}^{2+\gamma} \Pi_{ij}(v-w) f(w) \dd w
    \qquad\text{ and }\qquad
    \\
    \bar c[f](v) &= -\partial_{ij} \bar a_{ij}
        = \begin{cases} (d-1) (d+\gamma) \int_{\R^d} \abs{v-w}^\gamma f(w) \dd w &\text{if } \gamma > -d
    \\
    (d-1)\abs{\mathbb{S}^{d-1}} f(v)
        &\text{if }\gamma = -d.
    \end{cases}
\end{aligned}
\end{equation}
The case $\gamma = -d$ is exceptional because, formally, two derivatives of the kernel is a Dirac mass, making $\bar{c}$ a local operator. This computation fails for $\gamma = -d$ in $d=2$, which is why we omit this case.

\subsubsection{Another form of the Boltzmann collision operator}\label{sss:Carleman}
There is an analogous and equally useful alternative formulation of $\QB$, which more precisely identifies the ``non-divergence, nonlocal elliptic'' structure in $\QB$. In the expression~\eqref{eq:Boltzmann}, we may add and subtract $f(v)f(v_*')$ to find
\begin{equation}
    \QB(f, f)(v)
    = \Qs(f, f)(v) + \Qns(f, f)(v).
\end{equation}
The first term has a subscript ``s'' to indicate ``singular,'' as it contains the non-integrable kernel when $\QB$ is non-cutoff. Our assumptions on $b$ are sufficiently general that $\Qs$ need not actually be singular; however, we keep the terminology for consistency with previous works. It can be expressed in spherical or Carleman coordinates with any of the following equivalent expressions. 
\begin{equation}\label{e.Qs}
\begin{split}
    \Qs(f, f)(v)
    &\coloneqq \int_{\R^d}\int_{\S^{d-1}} \left( f(v') - f(v)\right) f(v_*') B(v-v_*, \sigma) \dd \sigma \dd v_* \\
    &= \int_{\R^d} \left( f(v') - f(v)\right) \int_{v'_*-v \perp v'-v} f(v_*') B_1(|v-v'_*|,|v-v'|)  \dd v'_* \dd v' \\
    &= \int_{\R^d} f(v'_*) \int_{v'-v \perp v'_\ast-v} (f(v')-f(v)) B_2(|v-v'_*|,|v-v'|)  \dd v' \dd v'_*.
\end{split}
\end{equation}
In the above formulas, the variables $v$, $v'$, $v_*$, $v_*'$, $\theta$, and $\sigma$ are given by ~\eqref{e.v'_v_*'_sigma} and $B$ is given by~\eqref{e.collision_kernel}.
The kernels in Carleman coordinates take the following form (see \cite[Appendix A]{Silvestre_new_regularization}):
\begin{align*}
    B_1(|v-v'_*|,|v-v'|) &\coloneqq \frac{2^{d-1} B(|v-v_*|,\sigma)}{|v-v'| \, |v-v_*|^{d-2}} = \frac{2^{d-1} |v-v_*|^{\gamma+2-d} }{|v-v'|}b(\sin \theta/2),\\
    B_2(|v-v'_*|,|v-v'|) &\coloneqq \frac{2^{d-1} B(|v-v_*|,\sigma)}{|v-v'_*| \, |v-v_*|^{d-2}} = \frac{2^{d-1} |v-v_*|^{\gamma+2-d} }{|v-v'_*|} b(\sin \theta/2).
\end{align*}
In these coordinates, the diameter of the collision sphere $r$ and the deviation angle $\theta$ are 
\begin{equation*}
    \sin(\theta/2) = \frac{\abs{v-v'}}{\abs{v-v_*}} \qquad \text{and} \qquad r = \abs{v-v_*} = \abs{v'-v'_*}.
\end{equation*}
Additionally, the integrability condition \eqref{e.collision_kernel} for $b(\sin \theta/2)$ is equivalent to
\begin{equation}\label{eq:B_2 condition}
\int_{v'-v \perp v'_*-v} \frac{|v'-v|^2\abs{v-v'_*}}{r^{3+\gamma}} B_2(|v-v'_*|,|v-v'|) \, \dd v' \lesssim 1.
\end{equation}

The other term, which has a subscript ``ns'' to indicate ``nonsingular'' as the (potential) singularity in $b$ vanishes in its derivation, is given by
\begin{equation}\label{e.Qns}
\begin{aligned}
    \Qns(f, f)(v) &\coloneqq f(v)\int_{\R^d}\int_{\S^{d-1}} (f(v_*') - f(v_*)) B(v-v_*,\sigma) \dd \sigma\dd v_*\\
    &= C_b f(v) \int_{\R^d} f(v-w) |w|^\gamma \dd w.
\end{aligned}
\end{equation}
where $C_b$ is a constant depending on the cross section $b$ and $\gamma$ (see ~\cite[Lemma~5.2]{Silvestre_new_regularization} for a derivation).

\subsection{Known local well-posedness and continuation criteria}\label{sss:known_continuation}

We state some known continuation criteria that we are able to bootstrap to obtain Theorem \ref{thm:main_continuation} using our {\em a priori} estimate Theorem \ref{thm:main_propagation}.

\subsubsection{The Landau equation}

We recall a local-in-time well-posedness result for soft potentials in three dimensions from \cite{HendersonSnelsonTarfulea}.

\begin{theorem} \label{t:local-well-posedness-Landau-soft}
    Assume $\gamma \in [-3,0)$ and $\Q=\QL$ as in \eqref{eq:Landau}. Let $m > \max(5, 15/(5 + \gamma))$. Assume $f_{\rm in}$ is a continuous nonnegative function such that $f_{\rm in} \in L^\infty_m(\R^6)$.
\begin{enumerate}

    \item There exists $T>0$ and a classical solution $f : [0,T]\times \Omega \times \R^3 \to [0,\infty)$. The function $f$ is strictly positive for $t>0$. The function $f$ is continuous up to $t = 0$ and $f(0,x,v) = f_{\rm in}(x,v)$.

    \item For any $k$, there is $m_k$ such that, if $m > m_k$, then $D_{t,x,v}^k f$ is continuous and bounded.

    \item Fix any $m_0 > \max(5, 15/(5+\gamma))$.  The solution can be extended for as long as $\|f(t)\|_{L^\infty_{m_0}} < +\infty$ with finite $L^\infty_m$-norm.

\end{enumerate}
\end{theorem}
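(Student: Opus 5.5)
This result is quoted from \cite{HendersonSnelsonTarfulea}, and within the present paper it is used as a black box. If I had to reconstruct it, I would follow the standard scheme for quasilinear kinetic equations: regularize, prove estimates uniform in the regularization, pass to the limit, and then bootstrap regularity.

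\textbf{Approximation.} I would regularize the problem by three devices:
\begin{itemize}
\item mollify $f_{\rm in}$ and cut it off to a compactly supported smooth function;
\item truncate the kernel $\abs{v-w}^{2+\gamma}$ away from its singularity;
\item add a small uniformly elliptic term $\eps\Delta_{x,v} f$.
\end{itemize}
For the regularized equation, written in the non-divergence form~\eqref{eq:Landau_alternative}, local existence and positivity follow from classical parabolic theory and the maximum principle.

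\textbf{Uniform weighted $L^\infty$ bound.} The key uniform estimate is a bound in $L^\infty_m$. At a first contact point of $f$ with the barrier $N(t)\brak{v}^{-m}$, I would evaluate $\bar a_{ij}\partial_{ij}(\brak{v}^{-m})+\bar c[f]\brak{v}^{-m}$. Using the estimates
\[
\bar a[f](v)\lesssim \|f\|_{L^\infty_m}\brak{v}^{2+\gamma},
\qquad
\bar c[f](v)\lesssim \|f\|_{L^\infty_m}\brak{v}^{\gamma}\quad(\gamma>-3),
\]
which are valid once $m>3+\gamma$ plus margin, one gets $\partial_t\|f\|_{L^\infty_m}\lesssim \|f\|_{L^\infty_m}^2$. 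For $\gamma=-3$, $\bar c$ is local and the same Riccati bound holds. This yields a time $T$ depending only on $\|f_{\rm in}\|_{L^\infty_m}$.

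\textbf{Compactness and the lower bound.} With the weighted bound in hand, I would use the De Giorgi--Nash--Moser theory for kinetic equations (Golse--Imbert--Mouhot--Vasseur) to obtain local H\"older continuity uniformly in $\eps$, and then pass to the limit. For $m>5$ the coefficients are uniformly elliptic on compact sets. The restriction $m>15/(5+\gamma)$ is what ensures the coefficients are H\"older and the limit is genuinely a solution. Strict positivity for $t>0$ would come from a spreading-of-positivity lemma along characteristics.

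\textbf{Higher regularity and continuation.} For part (2), I would apply kinetic Schauder estimates iteratively. Each derivative gained costs a fixed number of velocity weights, because the coefficients grow like $\brak{v}^{2+\gamma}$; this is why $m_k$ grows with $k$. For part (3), the existence time depends only on $\|f(t)\|_{L^\infty_{m_0}}$. The higher $L^\infty_m$ norm is propagated linearly by the same barrier, now with coefficients controlled by the $L^\infty_{m_0}$ norm. Hence the solution can be restarted as long as $\|f(t)\|_{L^\infty_{m_0}}$ stays finite.

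\textbf{Main obstacle.} I expect the hardest step to be the uniform H\"older/Schauder regularity near the lowest admissible weight $m$. There the coefficients are only barely H\"older, and the nonlocal dependence of $\bar a$ on $f$ must be estimated carefully, degenerating at large $\abs{v}$. I would first try working in a weighted, localized kinetic H\"older space, rescaled to unit cylinders adapted to the anisotropy of $\bar a$ at large $\abs{v}$.
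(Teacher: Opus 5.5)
The paper gives no proof of this theorem; it imports it from \cite{HendersonSnelsonTarfulea}, so your sketch has to be measured against that work. Much of your architecture is the standard one: regularize, close a Riccati bound for $\|f\|_{L^\infty_m}$ at a first contact point, apply kinetic De Giorgi and Schauder estimates in cylinders adapted to the anisotropy of $\bar a$, lose weights with each derivative, and restart for the continuation. Your barrier step is sound: since $\gamma<0$, even the crude bound $\abs{\bar a[f]}\lesssim\brak{v}^{2+\gamma}$ gives $\bar a_{ij}\partial_{ij}\brak{v}^{-m}\lesssim\brak{v}^{-m+\gamma}$. (Minor: those coefficient bounds need $m>\max(3,5+\gamma)$, not $m>3+\gamma$; this is harmless since $m>5$.)

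The genuine gap is your assertion that for $m>5$ the coefficients are uniformly elliptic on compact sets. Velocity decay bounds $\bar a[f]$ only from \emph{above}. Taking $m>5$ gives an energy bound and the $L^\infty$ bound gives non-concentration, but a lower bound on $\bar a[f](t,x,\cdot)$ also requires a \emph{lower} bound on the mass of $f(t,x,\cdot)$, and nothing in the hypotheses supplies it. A continuous nonnegative $f_{\rm in}$ may vanish for all $v$ when $x$ lies in an open set, and there $\bar a[f_{\rm in}]\equiv 0$. For small $t>0$, mass reaches such $x$ only by transport from elsewhere, so ellipticity there degenerates as $t\to0$. Your $\eps\Delta_{x,v}$ term provides only ellipticity $\eps$. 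Hence the Golse--Imbert--Mouhot--Vasseur constants are not uniform in $\eps$, and the compactness and regularity step fails as written.

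Consequently, positivity cannot be proved after the limit as an afterthought. An $\eps$-uniform quantitative lower bound on $f$ is the \emph{input} to the regularity theory, and it must be obtained without that theory. A Harnack-type argument would be circular, and transport alone propagates nothing along characteristics issuing from a vacuum region. In the cited line of work this is handled roughly as follows:
\begin{enumerate}
\item A barrier argument using only the \emph{upper} bounds on $\bar a$ and $\bar c$, which you already have, shows $f\ge\delta/2$ on a slightly smaller neighbourhood of a point where $f_{\rm in}\ge\delta$, for a short time and uniformly in the approximation.
\item This makes $\bar a[f](t,x,\cdot)$ elliptic for $x$ near that point, so the diffusion spreads positivity in $v$, and transport then carries it in $x$.
\item Iterating yields lower bounds on compact subsets of $(0,T]\times\Omega\times\R^3$; only then do the De Giorgi and Schauder estimates apply.
\end{enumerate}
Strict positivity for $t>0$ falls out of the same argument (and requires $f_{\rm in}\not\equiv 0$). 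This is the step your `main obstacle' paragraph misses.

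A smaller omission concerns part (1). Interior estimates say nothing about continuity up to $t=0$. For merely continuous $f_{\rm in}$ this needs a separate $\eps$-uniform argument, for example local barriers built from the modulus of continuity of $f_{\rm in}$, again using only upper bounds on the coefficients.
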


Theorem \ref{t:local-well-posedness-Landau-soft}.(iii) is a simple example of  a continuation criterion for the Landau equation.  It is convenient for our work.  We note, however, that more refined versions, using different integral quantities depending on $\gamma$, are stated in~\cite{SnelsonSolomon,HendersonSnelsonTarfulea}.

\subsubsection{The Boltzmann equation}

We now review the corresponding results for the Boltzmann equation. Here, $\Q = \QB$ is given by~\eqref{eq:Boltzmann}, and we consider the ``non-cutoff'' kernel, meaning that~\eqref{e.collision_kernel} holds for some $s\in (0,1)$.  In this case, the Boltzmann equation has smoothing properties~\cite{imbert2020regularity,Silvestre_Review}.  This allows one to bootstrap {\em a priori} bounds to a continuation criterion.  One such result, coming from~\cite{henderson2025decay,HST_Boltzmann_existence} is the following.

\begin{theorem}\label{t:local-well-posedness-Boltzmann-soft}
Assume that $\gamma \in (-3,0)$, $s\in (0,1)$, and $\Q = \QB$ as in~\eqref{eq:Boltzmann}.  Let $B$ be as in~\eqref{e.collision_kernel}. Assume that $f_{\rm in}$ is a continuous nonnegative function such that $f_{\rm in} \in L^\infty_m$ with $m > 3 + 2s$.

\begin{enumerate}

    \item There exists $T>0$ and a classical solution $f : [0,T]\times \Omega\times \R^3 \to [0,\infty)$. The function $f$ is strictly positive for $t>0$. The function $f$ is continuous up to $t = 0$ and $f(0,x,v) = f_{\rm in}(x,v)$.

    \item For any $k$, there is $m_k$ such that, if $m > m_k$, then $D_{t,x,v}^k f$ is continuous and bounded.

    \item There is $m_{\gamma,s} > 0$, depending only on $\gamma$ and $s$, such that the solution can be extended for as long as $\|f(t)\|_{L^\infty_{m_{\gamma,s}}} < +\infty$ with finite $L^\infty_m$-norm.

\end{enumerate}
\end{theorem}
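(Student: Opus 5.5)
This result is quoted from the literature, so the plan is to assemble it from \cite{HST_Boltzmann_existence} and \cite{henderson2025decay} rather than reprove it. I would organize the argument around one quantitative principle: the existence time depends only on $\|f_{\rm in}\|_{L^\infty_m}$. Parts (i) and (iii) both follow from this principle together with a propagation estimate.

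\textbf{Part (i).} First I would regularize the data and, if needed, truncate the angular kernel to obtain smooth approximate solutions $f^\eps$. For these I would prove an $L^\infty_m$ bound uniform in $\eps$ on a short interval $[0,T]$ with $T = T(\|f_{\rm in}\|_{L^\infty_m})$. The method is a barrier argument at a first contact point with $K e^{Ct}\brak{v}^{-m}$, using the splitting $\QB = \Qs + \Qns$ of Section~\ref{sss:Carleman}:
\begin{itemize}
\item the term $\Qns(f,f) = C_b f\,(f*|\cdot|^\gamma)$ is bounded using $m>3+\gamma$;
\item in $\Qs$, the contribution near $v$ is nonpositive at a maximum of $\brak{v}^m f$ up to commutator errors;
\item the far-field contribution, written in Carleman coordinates, is controlled because $m>3+2s$ makes the tails integrable against $B_2$.
\end{itemize}
This gives a closed Riccati-type inequality for $\|f(t)\|_{L^\infty_m}$, hence a short-time bound.

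Next I would pass to the limit $\eps\to 0$. Compactness comes from the kinetic De Giorgi and Schauder theory for non-cutoff Boltzmann \cite{imbert2020regularity,Silvestre_Review}, applied locally where $f^\eps$ is bounded. Strict positivity for $t>0$ comes from the spreading-of-positivity arguments of Henderson--Snelson--Tarfulea. Continuity at $t=0$ follows from the continuity of $f_{\rm in}$ by a local barrier comparison.

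\textbf{Part (ii).} I would differentiate the equation and apply the weighted Schauder estimates iteratively. Each derivative costs a fixed number of velocity moments, because the coefficients of $\Qs$ at velocity $v$ depend on $f$ at large velocities. This loss produces the threshold $m_k$.

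\textbf{Part (iii).} Assume $\|f(t)\|_{L^\infty_{m_{\gamma,s}}}$ stays bounded on $[0,T^*)$. I would take $m_{\gamma,s}$ larger than $5$, so that the mass and energy densities are bounded. A lower bound on the mass density, needed by the conditional regularity theory, is supplied by positivity spreading from any time $t_0>0$. I would then rerun the barrier argument of part (i), now with the coefficients controlled by the $L^\infty_{m_{\gamma,s}}$ norm, i.e.\ a linear rather than quadratic estimate. This shows that $\|f(t)\|_{L^\infty_m}$ grows at most exponentially. Since the local existence time depends only on this norm, restarting from times close to $T^*$ extends the solution past $T^*$.

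\textbf{Main obstacle.} I expect the hardest step to be the barrier estimate for the nonlocal part of $\Qs$ at the contact point with only polynomial decay. Collisions with a large $v_*'$ can feed mass into the tail at $v$, and controlling this is exactly where the threshold $m_{\gamma,s}$ is determined. I would first try to bound this term by splitting $|v'-v|\lessgtr |v|/2$, using the cancellation $f(v')-f(v)\le 0$ near the maximum in the first region, and crude tail integrability of $B_2$ in the second.
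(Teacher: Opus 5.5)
The paper gives no proof of this statement: it is quoted from \cite{HST_Boltzmann_existence,henderson2025decay}. Your decision to assemble it from those works is therefore exactly the paper's route. Your outline of (i) and (ii) broadly matches how those references proceed: regularized data, a short-time Riccati-type barrier bound on $\|f\|_{L^\infty_m}$, De Giorgi/Schauder compactness, spreading of positivity, and a Schauder bootstrap that loses moments. The one step of your reconstruction that would fail as written is the mechanism you give for (iii).

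You propose to rerun the barrier argument ``with the coefficients controlled by the $L^\infty_{m_{\gamma,s}}$ norm,'' i.e.\ to treat $\Q(f,\cdot)$ as a linear operator whose coefficient $f$ is only known to lie in $L^\infty_q$, $q=m_{\gamma,s}$. That cannot propagate decay of arbitrary order $m$, because of the nonlocal gain from small velocities.

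Write $\Qs(f,f)(v_0)=\int (f(v')-f(v_0))K_f(v_0,v')\dd v'$ with $K_f(v_0,v')=\int_{v'_*-v_0\perp v'-v_0} f(v'_*)B_1\dd v'_*$, as in \eqref{e.Qs}. For $|v'|\le |v_0|/2$, under \eqref{e.non-cutoff}, one has $K_f(v_0,v')\approx |v_0|^{-3-2s}\int_{w\perp (v'-v_0)} f(v_0+w)(|v_0|^2+|w|^2)^{(\gamma+1+2s)/2}\dd w$. This is an integral of the coefficient over a hyperplane on which $|v_0+w|\gtrsim |v_0|+|w|$.

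If $f$ there is controlled only through $\|f\|_{L^\infty_q}$, the best available bound is $K_f(v_0,v')\lesssim \|f\|_{L^\infty_q}|v_0|^{-q+\gamma}$. The gain $\int_{|v'|\le|v_0|/2} f(v')K_f(v_0,v')\dd v'$ is then a source of order $\|f\|^2_{L^\infty_q}|v_0|^{-q+\gamma}$. No barrier $N\brak{v}^{-m}$ can absorb this once $m>q-\gamma$. For a genuinely linear equation with such a coefficient, this source is really present and not an artifact of the estimate.

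Since $m_{\gamma,s}$ is fixed and $m$ is arbitrary, the one-shot linear estimate does not close, and enlarging $m_{\gamma,s}$ does not help. This is the nonlocal obstruction discussed in Section~\ref{sec:local_nonlocal_collisions}. In (i) the issue is harmless, because the same weight appears on both sides and $\gamma<0$.

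Two repairs are available because $\gamma<0$:
\begin{enumerate}
\item Iterate: each step upgrades control in $L^\infty_q$ to control in $L^\infty_{q+|\gamma|}$, and the term above becomes an admissible inhomogeneous source.
\item Exploit the nonlinear structure at the first contact time: bound $f$ on that hyperplane by the barrier itself. This gives a contribution $\lesssim \|f\|_{L^\infty_q}N|v_0|^{-m+\gamma}$, linear in the barrier height $N$.
\end{enumerate}
With either fix, $\|f(t)\|_{L^\infty_m}$ grows at most exponentially and your restart argument goes through.
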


\subsection{Notion of solution}\label{ss.notion_of_solution}

As illustrated in the previous subsection, very little regularity is needed to construct solutions and these solutions regularize immediately.  On the other hand, the uniqueness theory lags the existence theory.  Typically greater regularity and/or stronger non-vacuum conditions are assumed in order to obtain uniqueness~\cite{HendersonSnelsonTarfulea,HST_Boltzmann_existence,AMUXY1,AMUXY3,AMUXY4,HendersonSnelsonTarfulea1}.

Given the varied landscape, we opt to balance simplicity of our statements and proofs with the generality of our theorems.  As a result, we make stronger assumptions than is truly required for our proofs.  The alternative would involve significantly longer and more technical arguments, frequent discussions of slight differences between the Landau and Boltzmann cases, handling weak notions of solutions and their uniqueness in the low-regularity setting, and other added details that obscure the simple {\em a priori} estimate that is the heart of this work.

In this paper, we work with classical solutions in the sense that $(\partial_t + v\cdot \nabla_x) f$ and $D^2_vf$ are continuous.  We take $f$ to be continuous up to $t=0$.  We assume that $f$ has slightly improved qualitative decay; that is, for some $\eps>0$, $\vv^{m+\eps} f$ tends to zero as $|v|\to\infty$ uniformly in $(t,x)$.  In the whole space case $\Omega = \R^d$, we also assume that $\vv^m f$ decays as $|x|\to \infty$.

These assumptions can be removed with standard arguments.  For example, the extra qualitative decay in $L^\infty_{m+\eps}$ can be handled directly using, e.g., the ideas in \cite[Section 5]{ImbertMouhotSilvestre_decay}.  Note that $\eps$ does not enter the estimate in Theorem \ref{thm:main_propagation} quantitatively.  Another avenue is to use an approximation procedure as in~\cite{HendersonSnelsonTarfulea,HST_Boltzmann_existence}.  Indeed, the approximation procedure in those works shows that the constructed solutions starting from low regularity initial data are ``close'' to smooth solutions that remain in $L^\infty_{m'}$ for any large $m'$.  Thus, one obtains Theorem \ref{thm:main_propagation} for the smooth solutions and passes them to the low regularity setting in the limit.

\section{Propagation of upper bounds}
\label{sec:propagation}

We prove Theorem \ref{thm:main_propagation} by contradiction, using a comparison principle argument adapted to our setting. Consequently, the core of the argument is constructing appropriate barrier functions. In this section, we introduce our barrier function $\bb$ and isolate the main estimate on $\bb$ from which we deduce Theorem \ref{thm:main_propagation}.

We use a barrier function $\bb$ which is comparable to $\alpha(t) \brak{v}^{-m}$, where $\alpha$ grows in time according to $\|f(t)\|_{L^\infty_{d+\gamma}}$ and $m$ is a large exponent. Additionally, it is convenient for scaling for our computations that $\bb(v)$ is exactly equal to $\alpha |v|^{-m}$ for large $|v|$.
To that end, we define
\begin{equation}\label{eq:barrier}
    \bb(v) \coloneqq \alpha\bb_1(v) \qquad \text{where} \qquad \bb_1 (v) \coloneqq \begin{cases}
        |v|^{-m} &\text{ if } |v| \geq 1/2\\
        \text{smooth and positive} &\text{ if } |v| < 1/2.
    \end{cases}
\end{equation}
We further assume the function $\bb_1$ is radially symmetric, non-increasing in $|v|$, and $\bb_1(v) \leq |v|^{-m}$ for all $v$. Note that we have $\bb_1(v) \approx \brak{v}^{-m}$ for all $v$, but the function $\bb_1$ has simpler scaling properties for large $|v|$ that are convenient for our computations.

The main property of the barrier $\bb$ is contained in the following lemma used to prove Theorem \ref{thm:main_propagation}:
\begin{lemma} \label{lem:main_lemma}
    Let $\Q$ be either the Landau collision operator as in~\eqref{eq:Landau} or the Boltzmann collision operator as in~\eqref{eq:Boltzmann} with a kernel satisfying~\eqref{e.collision_kernel}. Then, there is an $m_0$, depending only on $\Q$, such that the following statement holds:

    For any $m > m_0$, define $\bb$ using this value of $m$ in \eqref{eq:barrier}, and suppose $f \in C^2(\R^d)$ satisfies
    \begin{equation}
        0\le f(v) \le \bb(v) \quad\text{for all }v\in \R^d \qquad\text{and} \qquad f(v_0) = \bb(v_0) \quad \text{for some }v_0\in\R^d.
    \end{equation}
    Then,
    \begin{equation*}
        \Q(f,f)(v_0) \le C \bb(v_0)^2 \brak{v_0}^{d + \gamma},
    \end{equation*}
    where $C$ depends only on $m$ and $\Q$.
\end{lemma}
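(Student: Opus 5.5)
The plan is to work at the contact point $v_0$ and exploit $f\le \bb$ with equality at $v_0$. This gives $\nabla f(v_0)=\nabla\bb(v_0)$ and $D^2 f(v_0)\le D^2\bb(v_0)$, and in the Boltzmann case $f(v')-f(v_0)\le \bb(v')-\bb(v_0)$ for every $v'$. For $|v_0|$ bounded, say $|v_0|\le R_0$, the estimate is crude. In the Landau case, $\bar a[f](v_0)\lesssim \alpha\brak{v_0}^{2+\gamma}$ once $m>d+2+\gamma$; moreover $D^2\bb\lesssim\alpha$ and $\bar c[f]\lesssim\alpha$, with the case $\gamma=-d$ being local. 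Since $\bb(v_0)\approx\alpha$ there, we get $\Q\lesssim\bb(v_0)^2$. The Boltzmann case is handled similarly, by comparing with the barrier and splitting near and far in $v'$. So the substance is the regime $|v_0|\gg1$, where $\bb=\alpha|v|^{-m}$ exactly near $v_0$ and we may scale.

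In the large-$|v_0|$ regime we decompose the collision partners $w$ (for Landau) or $v_*'$ (for Boltzmann) into three groups. \emph{Local} collisions have $|w|\ge |v_0|/2$. Here $f(w)\le \alpha 2^m|v_0|^{-m}$, so the integrals are controlled by scaling: $\bar a$ restricted to this region acts on $D^2\bb(v_0)\sim \alpha|v_0|^{-m-2}$, which is the size needed. It produces at most $\alpha^2|v_0|^{-2m}\cdot|v_0|^{d+\gamma}$, after integrating the kernel $|v_0-w|^{2+\gamma}$ against the decay $|w|^{-m}$, valid when $m>d+2+\gamma$. \emph{Nonlocal} collisions have $|w|\le|v_0|/2$. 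Here $f(w)$ is only bounded by $\bb(w)$, which may be huge relative to $\bb(v_0)$. This is the dangerous part, because $\int_{|w|<|v_0|/2} f(w)\,dw$ can be of size $\alpha$ with no decay, and this is exactly the tail-fattening mechanism.

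The key step, and the one I expect to be the main obstacle, is showing that the nonlocal contribution has a good sign. For Landau with $|w|\ll|v_0|$, I would Taylor expand the kernel: $|v_0-w|^{2+\gamma}\Pi(v_0-w)\approx|v_0|^{2+\gamma}\Pi(v_0)+O(|w||v_0|^{1+\gamma})$. Pairing the leading part with $D^2\bb(v_0)=\alpha m|v_0|^{-m-2}\bigl((m+2)\hat v_0\otimes\hat v_0-\Id\bigr)$, the projection kills the radial direction, which leaves the negative contribution $-\alpha m(d-1)|v_0|^{\gamma-m}\int f$. The matching $\bar c$ term gives $+(d-1)(d+\gamma)|v_0|^{\gamma}\alpha|v_0|^{-m}\int f$. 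The difference $(d-1)|v_0|^{\gamma-m}\alpha(d+\gamma-m)\int f$ is negative for $m>d+\gamma$; this is why $m_0=2+d+\gamma$ appears. The Taylor remainders must then be absorbed. The remainder carries $|w|\,|v_0|^{-1}$ relative to the leading term, and I would bound $\int_{|w|<|v_0|/2}|w|f(w)\,dw$ using $f\le\bb$ at lower order. Alternatively, I would control the intermediate ranges $1\ll|w|\le|v_0|/2$ directly via $f(w)\le\|f\|_{L^\infty_{d+\gamma}}$-type weights. Here I expect to have to use $\bb(w)=\alpha|w|^{-m}$, with the remainder dominated either by the negative term or by $\alpha^2|v_0|^{-2m+d+\gamma}$. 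This bookkeeping is where care is needed.

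For Boltzmann I would use $\QB=\Qs+\Qns$ with the Carleman form \eqref{e.Qs} (second line, integrating over $v'$ for fixed $v_*'$). For $|v_*'|\le|v_0|/2$, the hyperplane integral of $\bb(v')-\bb(v_0)$ against $B_2$ is computed by scaling. It is $\lesssim -c\,m\,\alpha|v_0|^{-m+\gamma}$ plus errors, since on the hyperplane through $v_0$ orthogonal to $v_*'-v_0$ (nearly orthogonal to $v_0$) the points $v'$ have $|v'|\ge$ roughly $|v_0|$. This negative term must dominate $\Qns$, which is $C_b\alpha|v_0|^{-m}|v_0|^{\gamma}\int f$ for the same partners. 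This forces $m_0$ large depending on $b$. Local partners $|v_*'|\ge|v_0|/2$ are again bounded by scaling using \eqref{eq:B_2 condition}, with the near-$v_0$ singular part controlled by the second-order Taylor bound on $\bb$. Summing all regions then yields $\Q(f,f)(v_0)\le C\bb(v_0)^2\brak{v_0}^{d+\gamma}$.
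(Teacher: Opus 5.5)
Your overall architecture is the paper's: a contact-point comparison, a good sign for collisions with small velocities, a crude scaling bound for the rest, and $m_0=d+2+\gamma$ in the Landau case. As written, however, two steps fail.

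\textbf{The cut must be at $\delta(m)\abs{v_0}$, not $\abs{v_0}/2$, and the sign must hold pointwise in $w$.} After scaling to $v_0=e$, the Landau weight multiplying $f(w)$ is
$\abs{e-w}^{\gamma}\bigl(m\abs{e-w}^2[(m+2)\Pi(e-w)e\cdot e-(d-1)]+(d-1)(d+\gamma)\bigr)$.
At $w\perp e$ with $\abs{w}=\tfrac12$ this is about $+m^2/4$, so nonpositivity holds only for roughly $\abs{w}\lesssim m^{-1/2}$. The Boltzmann case has the same problem: for $\abs{v_*'}\sim\abs{v_0}/2$ the hyperplane passes within $\tfrac{\sqrt3}{2}\abs{v_0}$ of the origin, where $\bb$ can exceed $\bb(v_0)$ by $(2/\sqrt3)^m$. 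None of your remainder strategies closes:
\begin{itemize}
\item Using $\int\abs{w}f\dd w\lesssim\alpha$ leaves an error of order $\alpha^2\abs{v_0}^{\gamma-m-1}$, which exceeds the target $\alpha^2\abs{v_0}^{-2m+d+\gamma}$ by $\abs{v_0}^{m-d-1}$.
\item A fixed cutoff $\abs{w}\ge R$ leaves $\alpha^2R^{d-m}\abs{v_0}^{\gamma-m}$, again too large.
\item Bounds via $\norm{f}_{L^\infty_{d+\gamma}}$ do not produce the stated right-hand side.
\end{itemize}
The correct dichotomy is by the ratio $\abs{w}/\abs{v_0}$. Choose $\delta(m)$ by continuity at $w=0$ so that the integrand is nonpositive on all of $B_{\delta\abs{v_0}}$. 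On $\abs{w}\ge\delta\abs{v_0}$, use $f(w)\le\alpha\delta^{-m}\abs{v_0}^{-m}$, and then your crude scaling bound applies with a $\delta$-dependent constant.

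\textbf{Your Boltzmann decomposition misses one nonlocal term, and this is the more serious gap.} You split by $v_*'$ (and by $v_*$ in $\Qns$), but there is also the gain contribution in which $v'$ is near the origin. Orthogonality then forces $v_*'$ near $v_0+v_0^\perp$, so this term sits inside your ``local partner'' region. There, the far part of the inner $v'$-integral meets $f(v')\approx\alpha$ near $v'=0$, so the bound $f(v')\lesssim\alpha\abs{v_0}^{-m}$ that your crude estimate needs is false. By Fubini this term (it is $I_2$ in the paper) scales like $\alpha\abs{v_0}^{\gamma-m}\int_{B_{\delta\abs{v_0}}}f$, exactly as the loss term does. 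It therefore cannot be bounded by $\alpha^2\abs{v_0}^{-2m+d+\gamma}$ and must be absorbed by the negative term.

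The paper handles this as follows:
\begin{itemize}
\item It deletes $f$ on $B_{\delta\abs{v_0}}$ in every slot at once, producing the loss term $I_1$ ($v_*$ small) and two gain terms, $I_2$ ($v'$ small) and $I_3$ ($v_*'$ small).
\item It writes all three as integrals over the common hyperplane $z\in v_0+(w-v_0)^\perp$.
\item At $w=0$, both gains carry $\abs{z}^{-m}$ with $\abs{z}\ge1$ and vanish monotonically as $m\to\infty$, while the loss term is independent of $m$.
\end{itemize}

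Relatedly, your claim that the negative part of $\Qs$ is $\lesssim -cm\alpha\abs{v_0}^{\gamma-m}$ is incorrect. At $v_*'=0$ and for a cutoff kernel it converges, as $m\to\infty$, to $-\bigl(\int_{\S^{d-1}}b\dd\sigma+C_b\bigr)\abs{v_0}^{\gamma}\bb(v_0)$. So it beats the $\Qns$ contribution $C_b\abs{v_0}^\gamma\bb(v_0)$ only by the margin $\int b\dd\sigma$ supplied by the cancellation lemma. For non-cutoff kernels it diverges, but like $m^s$ under \eqref{e.non-cutoff}, not like $m$. Working with gain and loss terms directly, as the paper does, avoids this bookkeeping.
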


As mentioned in the introduction, $m_0 = d+2+\gamma$ in the Landau case.  It is more complicated in the Boltzmann case, and, as such, we do not find it explicitly.

In the forthcoming comparison principle argument, $f$ will be a solution to Landau or Boltzmann equation, $\bb$ will be the barrier, and the point $v_0$ will be a contact point between the two. A simple corollary of Lemma \ref{lem:main_lemma} is the following estimate at any contact point $v_0$:
\begin{equation} \label{eq:main_lemma_consequence}
    \Q(f,f)(v_0) \leq C \bb(v_0) \|f\|_{L^\infty_{d+\gamma}}.
\end{equation}
The inequality \eqref{eq:main_lemma_consequence} follows from the simple observation that at any contact point:
\begin{equation}
\brak{v_0}^{d + \gamma} \bb(v_0) = \brak{v_0}^{d + \gamma} f(v_0) \leq \|f\|_{L^\infty_{d+\gamma}}.
\end{equation}
We now use \eqref{eq:main_lemma_consequence} to prove Theorem \ref{thm:main_propagation}.

\begin{proof}[Proof of Theorem \ref{thm:main_propagation}]
    Fix $m > m_0$ and $\eps>0$. Let $C$ be the associated constant from Lemma \ref{lem:main_lemma}.
    We show that, for any $t \in [0,T]$, any $x\in \Omega$, and any $v \in \R^d$,
    \begin{equation}\label{eq:eps_bound}
        f(t,x,v)
        < \underbrace{\left(\|f_{\rm in}\|_{L^\infty_m} + \eps\right) \exp\left[(C+\eps) \int_0^t \|f(s)\|_{L^\infty_{d+\gamma}} \dd s\right]}_{=:\alpha(t)} \bb_1(v)
        \eqqcolon \bb(t,v),
    \end{equation}
    which is sufficient to conclude the proof after passing $\eps \to 0^+$. 

    We argue by contradiction. Suppose that the inequality~\eqref{eq:eps_bound} fails.  By the qualitative regularity and decay properties of $f$ (see Section \ref{ss.notion_of_solution}), the set of contact points
    \begin{equation}
        \mathrm{CP} \coloneqq \set{(t,x,v) \in [0,T]\times\Omega \times \R^d  \; \mid \; f(t,x,v) = \bb(t,v)}
    \end{equation}
    is non-empty and compact.  
    Thus, there is an ``earliest'' contact point $(t_0,x_0,v_0) \in \mathrm{CP}$; that is, a contact point with the smallest time coordinate. From~\eqref{eq:eps_bound} and the positivity of $\eps$, we see that $t_0 > 0$.  Because this is the earliest contact point, we also see that
    \begin{equation*}
        f(t_0,x_0,v_0)
        = \bb(t_0,v_0) \qquad \text{and} \qquad f(t,x,v) \le \bb(t,v) \qquad \text{for each }0\le t \le t_0\;\text{and}\;(x,v)\in\Omega\times \R^d. 
    \end{equation*}
    Applying Lemma \ref{lem:main_lemma}, or rather \eqref{eq:main_lemma_consequence}, at the fixed time $t_0$, we find that
    \begin{equation}
        \Q(f,f)(t_0,x_0,v_0)  \leq C \bb(t_0,v_0) \|f(t_0)\|_{L^\infty_{d+\gamma}}.
    \end{equation}
    We now show that this contradicts~\eqref{eq:kinetic}. 
    Indeed, the minimum of $\bb - f$ over the set $[0,t_0]\times \Omega \times \R^d$ is obtained at $(t_0,x_0,v_0)$, implying that
    \begin{equation}
        (\partial_t + v\cdot\nabla_x)f(t_0,x_0,v_0)
        \geq (\partial_t + v\cdot\nabla_x) \bb(t_0,v_0)
        = (C+\eps) \|f(t_0)\|_{L^\infty_{d+\gamma}}\bb(t_0,v_0).
    \end{equation}
    Recalling~\eqref{eq:kinetic}, we find
    \begin{equation}
        \begin{split}
            (C+\eps) \|f(t_0)\|_{L^\infty_{d+\gamma}}\bb(t_0,v_0) \leq (\partial_t + v_0\cdot\nabla_x)f &= \Q(f,f)(t_0,x_0,v_0)\\
            &\leq C \|f(t_0)\|_{L^\infty_{d+\gamma}} \bb(t_0,v_0),
        \end{split}
    \end{equation}
    which is clearly a contradiction. This concludes the proof.
\end{proof}

We have shown how Lemma \ref{lem:main_lemma} implies Theorem \ref{thm:main_propagation}. The remainder of this paper is devoted to the proof of Lemma \ref{lem:main_lemma} both for the Landau and Boltzmann equations.

\section{Local and nonlocal contributions to the collision operator}\label{sec:local_nonlocal_collisions}

In this section, we reduce Lemma \ref{lem:main_lemma} into two precise, generic statements about collision operators, Lemma \ref{lem:removing_small_w} and Lemma \ref{lem:crude_bound_large_w} below; the second is further reduced by scaling to Lemma \ref{lem:crude_bound_large_w_scaled}. These lemmas yield Lemma \ref{lem:main_lemma} without using fine properties of $\QL$ and $\QB$.  Verifying them, however, requires handling the Landau and Boltzmann cases separately due to the differences in the forms of $\QL$ and $\QB$.

The main difficulty in establishing Lemma \ref{lem:main_lemma} lies in properly accounting for the nonlocal structure of the collision operator $\Q$. The assumption $f(v) \leq \bb(v)$ says that the bulk of $f$ is concentrated around the origin in velocity space. It is well documented that nonlocal contributions of the bulk may unavoidably destroy decay properties. For example, consider the following decay estimate for the three-dimensional inverse Laplacian:
\begin{equation*}
    \sup_{v \in \R^3} \abs{\Delta^{-1}f(v)} = \sup_{v\in\R^3} \frac{1}{4\pi}\abs{\int_{\R^3} \frac{f(w)}{\abs{v-w}} \dd w} \lesssim \frac{\norm{f}_{L^\infty_m}}{\brak{v}} \qquad \text{for}\quad m > 3.
\end{equation*}
Relying only on decay properties of $f$, this estimate is optimal: choosing $m$ larger cannot improve the decay rate of $\Delta^{-1}f$.
Similar convolutions appear in the Landau equation, and even more severe nonlocal contributions occur in the Boltzmann equation. 
Clearly one must use precise structure of the collision operator to overcome this type of nonlocal phenomenon.

As we will see, the large values of $f(w)$ have a negative contribution to $\Q(f,f)(v)$ when $|w|$ is small. Once we understand that, we can use a crude estimate for the contribution of the values of $f(w)$ for $w$ large to obtain the scaling consistent estimate of Lemma \ref{lem:main_lemma}. This logic applies to both the Landau and Boltzmann equations, but the details of the computations are different in each case.

\subsection{The nonlocal contribution of small velocities}

The following lemma tells us that we can neglect the values of $f(w)$ for $w$ small, with the same assumptions as in Lemma \ref{lem:main_lemma}. We consider the same type of barrier function $\bb$ as in Section \ref{sec:propagation}.

\begin{lemma} \label{lem:removing_small_w}
    Let $\Q$ be either the Landau collision operator as in~\eqref{eq:Landau} or the Boltzmann collision operator as in~\eqref{eq:Boltzmann} with a kernel satisfying~\eqref{e.collision_kernel}. Then, there is an $m_0 \gg 1$ depending on $\Q$ such that the following statement holds:

    For any $m > m_0$ and any $\alpha > 0$, define $\bb$ as in \eqref{eq:barrier} and suppose $f \in C^2(\R^d)$ satisfies
    \begin{equation}
        0\le f(v) \le \bb(v) \quad\text{for all }v\in \R^d \qquad\text{and} \qquad f(v_0) = \bb(v_0) \quad \text{for some }v_0\in\R^d\text{ with } \abs{v_0} \ge 1.
    \end{equation}
    For any $\delta > 0$, consider the auxiliary function
    \begin{equation}\label{defn:fbar}
    \uf(v) = \begin{cases}
        f(v) &\text{ if } \abs{v} \geq \delta \abs{v_0}\\
        0 &\text{ if } \abs{v} < \delta\abs{v_0}
    \end{cases}.
    \end{equation}
    Then, for some $\delta \ll 1$ depending only on $m$ and $\Q$, we have
    \begin{equation*}
        \Q(f,f)(v_0) \le \Q(\uf,\uf)(v_0).
    \end{equation*}
\end{lemma}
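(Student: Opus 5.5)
Throughout, write $h \coloneqq f - \uf$, so that $0 \le h \le \bb$ and $h$ is supported in $\set{|w| < \delta|v_0|}$. Since $|v_0| \ge 1$ and $\delta \ll 1$, the functions $f$ and $\uf$ coincide on a neighborhood of $v_0$. So all local quantities at $v_0$ agree: the values, the first and second derivatives, and the contact property $\uf \le \bb$, $\uf(v_0) = \bb(v_0)$. The plan is to write $\Q(f,f)(v_0) - \Q(\uf,\uf)(v_0)$ as a sum of terms that are linear in $h$. Each term is then bounded by $\bb(v_0)|v_0|^{\gamma}\int h$ times a constant. The aim is to show that the total coefficient is negative once $m$ is large and then $\delta$ is small. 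The geometric input is that every $w$ in the support of $h$ satisfies $\widehat{v_0 - w} = \hat v_0 + O(\delta)$.

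\emph{Landau case.} I would use the non-divergence form \eqref{eq:Landau_alternative}. Because $\ā$ and $\bar c$ are linear in $f$ and $f = \uf$ near $v_0$, the difference is
\[
\Q(f,f)(v_0) - \Q(\uf,\uf)(v_0) = \bar a_{ij}[h](v_0)\,\partial_{ij} f(v_0) + \bar c[h](v_0)\, f(v_0).
\]
At the contact point we have $D^2 f(v_0) \le D^2 \bb(v_0)$, and $\bar a[h](v_0)$ is positive semidefinite, so the first term is at most $\bar a[h](v_0) : D^2\bb(v_0)$. For $|v| \ge 1/2$ the barrier Hessian is
\[
D^2\bb(v_0) = \alpha m |v_0|^{-m-2}\bigl[(m+2)\hat v_0 \otimes \hat v_0 - \Id\bigr].
\]
For $|w| < \delta|v_0|$ the projection satisfies $\Pi(v_0 - w) : \hat v_0 \otimes \hat v_0 = O(\delta^2)$ and $\operatorname{tr}\Pi = d-1$. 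Hence the first term is at most
\[
\alpha m |v_0|^{-m+\gamma}\bigl[C(m+2)\delta^2 - (d-1)\bigr](1 + O(\delta))\int h.
\]
For $\gamma > -d$ the second term is at most $(d-1)(d+\gamma)\,\alpha|v_0|^{-m+\gamma}(1 + O(\delta))\int h$. For $\gamma = -d$ it vanishes, since $\bar c[h](v_0) = c\,h(v_0) = 0$. Choosing $m > d + \gamma$ with some room, and then $\delta$ small depending on $m$, makes the sum nonpositive. A careful version of the same computation should give $m_0 = d + 2 + \gamma$.

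\emph{Boltzmann case.} Here I would not expand $\QB$ bilinearly, because the gain term $\QB(\cdot, h)$ is nonnegative and cannot be controlled. Instead I would split $\QB = \Qs + \Qns$ as in \eqref{e.Qs}--\eqref{e.Qns} and substitute $f = \uf + h$. This produces three contributions.
\begin{itemize}
\item[(a)] $\Qns$ gives $C_b f(v_0)\int h(w)|v_0 - w|^\gamma\,dw \approx C_b\,\bb(v_0)|v_0|^\gamma \int h$.
\item[(b)] $\Qs$ with $h$ in the $v'_*$ slot, which I estimate in the $B_2$ form:
\[
\int h(v'_*)\int_{v'-v_0 \perp v'_* - v_0}\bigl(f(v') - f(v_0)\bigr)B_2\,dv'\,dv'_* \le \int h(v'_*)\int \bigl(\bb(v') - \bb(v_0)\bigr)B_2\,dv'\,dv'_*.
\]
Since $v'_*$ is small, the hyperplane is nearly tangent to the sphere of radius $|v_0|$. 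There $\bb(v') \approx \bb(v_0)\bigl(1 + |v' - v_0|^2/|v_0|^2\bigr)^{-m/2}$ up to $O(\delta)$ tilt errors. This yields a negative term of size $-\bb(v_0)|v_0|^\gamma\int h \cdot \Lambda(m)$, where $\Lambda(m) \to \int b\,(\cdot)$ as $m \to \infty$. In the non-cutoff case $\Lambda(m) \to \infty$; in the cutoff case it tends to the full angular mass. The near-diagonal part is controlled using \eqref{eq:B_2 condition}.
\item[(c)] $\Qs$ with $\uf$ in the $v'_*$ slot and $h$ in the $v'$ slot, a nonnegative gain. I would switch to the $B_1$ form and integrate $h(v')$ against $\uf(v'_*)B_1$ over the hyperplane through $v_0$ orthogonal to $v' - v_0 \approx v_0$. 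On that hyperplane $|v'_*| \gtrsim |v_0|$, and $\uf \le \bb$ decays like $|v'_*|^{-m}$, so for $m > d$ this is at most $C\,\bb(v_0)|v_0|^\gamma\int h$ with $C$ bounded uniformly in large $m$.
\end{itemize}

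The main obstacle is the constant comparison in the Boltzmann case: the negative term (b) must dominate $C_b$ from (a) plus the gain (c). For non-cutoff kernels this follows from $\Lambda(m) \to \infty$. For general kernels satisfying only \eqref{e.collision_kernel}, including cutoff ones, I would try the cancellation-lemma representation of $C_b$ from \cite[Lemma~5.2]{Silvestre_new_regularization}. This would rewrite $C_b$ as an angular integral that is matched term by term against the limiting value of $\Lambda(m)$. The aim is a strict gap for large $m$, and it is the source of $m_0$ depending on $b$. Finally, $\delta$ is fixed small depending on $m$ to absorb all $O(\delta)$ tilt errors.
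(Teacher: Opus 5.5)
Your Landau argument is the paper's. You write the difference as $\bar a[h]:\nabla^2 f(v_0)+\bar c[h]f(v_0)$, replace $\nabla^2 f(v_0)$ by $\nabla^2\bb(v_0)$ using $\bar a[h]\ge0$, and use $\Pi(v_0-w)\hat v_0\cdot\hat v_0=O(\delta^2)$ to make the integrand negative once $m>d+\gamma$ and $\delta$ is small. In the Boltzmann case, your split into (a), (b), (c) via $\QB=\Qs+\Qns$ is a different route from the paper's. It does close for non-integrable $b$: there $\Lambda(m)\to\infty$ by monotone convergence, and this absorbs (a) and (c), which are bounded uniformly in $m$.

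The gap is the case $\int_{\S^{d-1}}b\,\dd\sigma<\infty$, which the lemma covers since only \eqref{e.collision_kernel} is assumed. There you leave the decisive inequality as something you ``would try,'' and the quantities you state would not give it. First, $\lim_{m\to\infty}\Lambda(m)$ is not the angular mass $\int b\,\dd\sigma$. Because of the Jacobian of $v_*\mapsto v_*'$, it is $\int_{\S^{d-1}}b\,\cos^{-(d+\gamma)}(\theta/2)\,\dd\sigma$. With the value you state, the comparison against $C_b=\int b\,\bigl(\cos^{-(d+\gamma)}(\theta/2)-1\bigr)\dd\sigma$ is false in general: for $d=3$, $\gamma=0$ and $b$ concentrated near $\theta=\pi/2$, $C_b\approx(2^{3/2}-1)\int b\,\dd\sigma$. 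Second, a bound on (c) that is merely uniform in $m$ cannot be absorbed by a negative term of fixed size.

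The missing observations are two. First, (a) plus the loss part of (b) is, by the definition of $C_b$, exactly the original loss term $-f(v_0)\iint h(v_*)B\,\dd\sigma\,\dd v_*=-f(v_0)\norm{b}_{L^1(\S^{d-1})}\int h(w)\abs{v_0-w}^{\gamma}\dd w$. So the gap $\lim_m\Lambda(m)-C_b=\int b\,\dd\sigma>0$ is automatic, and splitting off $\Qns$ only creates a comparison problem. Second, every gain contribution (the $\bb(v')$ part of (b) and all of (c)) tends to $0$ as $m\to\infty$. At $w=0$ the relevant hyperplane $e+e^\perp$ meets $\overline{B_1}$ only at $e$, so $\abs{z}^{-m}\to0$ a.e. on it.

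This is exactly the paper's argument, done without the $\Qs/\Qns$ split. It writes the difference as $I_1+I_2+I_3$: the loss, your (c), and the gain with $h$ in the $v_*'$ slot. It represents $I_1$ by stereographic projection on the same hyperplane $\{(z-v_0)\perp(w-v_0)\}$ used for the Carleman form of $I_3$. The combined integrand is then finite even when $I_1$ and $I_3$ are separately infinite. It reduces to $w=0$ by continuity and lets $m\to\infty$ to get $-\int_{e+e^\perp}\abs{z}^{-2(d-1)}b(\abs{e-z}/\abs{z})\,\dd z<0$. This is one argument for both cutoff and non-cutoff kernels. Relatedly, what blocks a naive bilinear expansion is not $\QB(\uf,h)$, which is your controllable (c). It is that the loss and gain parts of $\QB(h,f)$ diverge separately for non-cutoff $b$.
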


\subsection{The contribution of large velocities}

In order to prove Lemma \ref{lem:main_lemma}, we will combine Lemma \ref{lem:removing_small_w} with the following lemma, which gives us a crude estimate for the contribution of the values of $f(w)$ for $w$ large.

\begin{lemma} \label{lem:crude_bound_large_w}
    Let $\Q$ be either the Landau collision operator as in~\eqref{eq:Landau} or the Boltzmann collision operator as in~\eqref{eq:Boltzmann} with a kernel satisfying~\eqref{e.collision_kernel}. Then, there is an $m_0 \gg 1$ depending on $\Q$ such that the following statement holds.

    For any $m > m_0$ and $\alpha > 0$, define $\bb$ as in \eqref{eq:barrier}. For any $\delta \in(0,1)$, $v_0\in\R^d$, and $f\in C^2(\R^d)$, define $\uf$ as in \eqref{defn:fbar}.
    Suppose that
    \begin{equation}
        0\le f(v) \le \bb(v) \quad\text{for all }v\in \R^d \qquad\text{and} \qquad f(v_0) = \bb(v_0) \quad \text{for some }v_0\in\R^d\text{ with } \abs{v_0} \ge 1.
    \end{equation}
    Then, we have that
    \begin{equation*}
        \Q(\uf,\uf)(v_0) \lesssim \alpha^2\abs{v_0}^{-2m + d + \gamma},
    \end{equation*}
    where the implied constant depends on $m$, $d$, $\delta$, and $\Q$.
\end{lemma}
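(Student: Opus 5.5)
The plan is to reduce Lemma \ref{lem:crude_bound_large_w} by scaling to a normalized statement at $|v_0|=1$ (this is the announced Lemma \ref{lem:crude_bound_large_w_scaled}), and then to prove the normalized bound with crude estimates that use only $\uf\le\bb$, $\uf(v_0)=\bb(v_0)$, and the fact that $\uf$ vanishes on $B_{\delta|v_0|}$.

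\textbf{Scaling step.} Set $R=|v_0|\ge 1$ and define $g(v)\coloneqq R^{m}\alpha^{-1}\uf(Rv)$. On the support of $\uf$ we have $|Rv|\ge \delta R\ge\delta$. I would shrink $\delta$ if needed, or replace the cutoff $1/2$ in \eqref{eq:barrier} by $\min(1/2,\delta)$, so that $\bb_1(w)=|w|^{-m}$ on that region. Then $0\le g(v)\le |v|^{-m}$ for all $v$, $g$ vanishes on $B_\delta$, and $g(\hat v_0)=1=|\hat v_0|^{-m}$ is a contact point with $|\hat v_0|=1$.

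Both operators are homogeneous. For Landau, $\QL(\uf,\uf)(Rv)$ picks up a factor $R^{d+\gamma}$ from the kernel $|v-w|^{2+\gamma}\,\dd w$ combined with two derivatives, and a factor $\alpha^2R^{-2m}$ from the two copies of $\uf$. For Boltzmann, $B$ is $\gamma$-homogeneous and $\dd v_*$ contributes $R^d$. In both cases
\[
\Q(\uf,\uf)(v_0)=\alpha^2 R^{-2m+d+\gamma}\,\Q(g,g)(\hat v_0).
\]
It therefore suffices to prove $\Q(g,g)(e)\le C(m,d,\delta,\Q)$ for every unit vector $e$ and every $C^2$ function $g$ with $0\le g\le |v|^{-m}\mathbf 1_{|v|\ge\delta}$ and $g(e)=1$.

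\textbf{Landau case.} I use the form \eqref{eq:Landau_alternative}. Since $g-|v|^{-m}$ attains its maximum $0$ at $e$, and $|v|^{-m}$ is smooth near $e$, we get $D^2g(e)\le D^2(|v|^{-m})(e)$. The matrix $\bar a[g](e)$ is nonnegative definite, so
\[
\bar a_{ij}\partial_{ij}g(e)\le \bar a_{ij}[g](e)\,\partial_{ij}|v|^{-m}(e)\lesssim m^2\,\mathrm{tr}\,\bar a[g](e).
\]
The trace is bounded by $\int |e-w|^{2+\gamma}g(w)\dd w\le \int_{|w|\ge\delta}|e-w|^{2+\gamma}|w|^{-m}\dd w$. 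This is finite once $m>d+2+\gamma$, which is the claimed threshold $m_0=d+2+\gamma$, and it is also locally integrable since $2+\gamma>-d$. The term $\bar c[g]g(e)$ is bounded by $\int|e-w|^\gamma|w|^{-m}\mathbf 1_{|w|\ge\delta}\dd w<\infty$ when $\gamma>-d$, and by $C g(e)^2=C$ when $\gamma=-d$.

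\textbf{Boltzmann case.} I split $\QB=\Qs+\Qns$. The term $\Qns(g,g)(e)=C_b\,g(e)\int g(e-w)|w|^\gamma\dd w$ is bounded exactly as in the Landau case. For $\Qs$ I use the third Carleman expression in \eqref{e.Qs}, with $g(v')-g(e)\le |v'|^{-m}-1$ because $g\le|v|^{-m}$ and $g(e)=1$.

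When $|v'-e|<1/2$, I would symmetrize over $v'\mapsto 2e-v'$ within each hyperplane through $e$. The first-order term cancels, and the second-order term is controlled by $m^2|v'-e|^2$. Condition \eqref{eq:B_2 condition} then bounds the inner integral by $C\,r^{3+\gamma}/|e-v'_*|$ with $r=|e-v_*|$, which leaves the outer integral $\int g(v'_*)\,r^{3+\gamma}/|e-v'_*|\dd v'_*$ to estimate. When $|v'-e|\ge1/2$ the integrand is at most $|v'|^{-m}$, and I expect this region to be the main obstacle. The hyperplane integral of $|v'|^{-m}B_2$ must be bounded together with the weight $g(v'_*)\le|v'_*|^{-m}$, while $r$ can be large, so that $|e-v'_*|$ and $|e-v'|$ become comparable to $r$. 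I would first try to dominate $B_2$ by $|v'-e|^{-(d-1)-2s'}$-type bounds coming from \eqref{e.collision_kernel}, and then verify convergence of the resulting double integral through the condition $m\gg1$. This is where the Boltzmann threshold $m_0$, depending on $b$, would arise.
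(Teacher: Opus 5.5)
Your scaling reduction, the Landau case, the bound on $\Qns$, and the cancellation of the first-order term for $|v'-e|<1/2$ all match the paper's argument. In the scaling step there is no need to shrink $\delta$, which would change the function $\uf$ you are estimating, or to move the cutoff in $\bb$. The standing assumption $\bb_1(v)\le |v|^{-m}$ for all $v$ already gives $g\le |v|^{-m}$ everywhere.

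The genuine gap is the region $|v'-e|\ge 1/2$ in $\Qs$. You leave it open, and the route you sketch for it is not available. The lemma must hold for every angular kernel $b$ satisfying only the weighted integrability condition in \eqref{e.collision_kernel}, cutoff or not, with no prescribed profile. So there is no pointwise bound of the type $|v'-e|^{-(d-1)-2s'}$ with which to dominate $B_2$. Nor is this region free of the angular singularity. Since $\sin(\theta/2)=|v'-e|/r$ with $r^2=|v'-e|^2+|v'_*-e|^2$, grazing collisions occur there whenever $|v'_*-e|\gg|v'-e|$.

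The missing observation is that on $\{|v'-e|\ge 1/2\}$ the weight $|v'-e|^2$ in \eqref{eq:B_2 condition} is bounded below. You can therefore insert it at the cost of a constant, and then apply \eqref{eq:B_2 condition} on the whole hyperplane. Drop $-g(e)B_2\le 0$ and use $g(v')\lesssim_\delta \brak{v'}^{-m}$, which holds because $g=0$ on $B_\delta$. This gives
\[
\int_{\substack{v'-e\perp v'_*-e\\ |v'-e|\ge 1/2}} \brak{v'}^{-m} B_2 \dd v'
\le \sup_{|v'-e|\ge 1/2}\frac{r^{3+\gamma}}{|e-v'_*|\,|v'-e|^2\,\brak{v'}^{m}}
\int_{v'-e\perp v'_*-e}\frac{|v'-e|^2\,|e-v'_*|}{r^{3+\gamma}}\,B_2 \dd v'
\lesssim \frac{1}{|v'_*-e|}+|v'_*-e|^{2+\gamma}.
\]
To bound the supremum, use $|v'-e|^{-2}\le 4$ and $r^{3+\gamma}\lesssim |v'-e|^{3+\gamma}+|v'_*-e|^{3+\gamma}$ (or $r^{3+\gamma}\le|v'_*-e|^{3+\gamma}$ if $\gamma<-3$). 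The factor $\brak{v'}^{-m}$ absorbs the growth in $v'$.

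The same computation also completes your near region, which you left unfinished. Take the supremum over $|v'-e|\le 1/2$, where $r^2\le \frac14+|v'_*-e|^2$. The outer integral $\int g(v'_*)\bigl(|v'_*-e|^{-1}+|v'_*-e|^{2+\gamma}\bigr)\dd v'_*$ is then finite for $m>d+2+\gamma$, using $g=0$ on $B_\delta$ and $d\ge 2$.

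In particular, your expectation that a $b$-dependent threshold arises here is mistaken. The choice $m_0=d+\gamma+2$ suffices for this lemma in the Boltzmann case as well. The dependence of $m_0$ on $b$ enters only through Lemma \ref{lem:removing_small_w}.
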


\noindent Recall that both the Boltzmann and Landau collision operators satisfy the following scaling property:
\begin{equation}\label{eq:scaling}
    \Q(\alpha f_\lambda, \alpha f_\lambda)(v)
    = \alpha^2 \lambda^{-d-\gamma} \Q(f,f)(\lambda v) \qquad\text{ where }f_\lambda(w) = f(\lambda w).
\end{equation}
Consequently, the homogeneity of the barrier function $\bb$ for large velocities combined with scaling reduces Lemma \ref{lem:crude_bound_large_w} to the following simplified statement, which we apply to $\tilde f(v) \coloneqq \alpha^{-1} |v_0|^m \underline f(|v_0|v)$ and $e = \hat v_0$.

\begin{lemma} \label{lem:crude_bound_large_w_scaled}
    Let $\Q$ be either the Landau collision operator as in~\eqref{eq:Landau} or the Boltzmann collision operator as in~\eqref{eq:Boltzmann} with a kernel satisfying~\eqref{e.collision_kernel}. Then, there is an $m_0 > 0$ depending on $\Q$ such that the following statement holds.

    For any $m > m_0$, $\delta \in (0,1/2)$, $e \in \S^{d-1}$, and $f\in C^2(\R^d)$ satisfying
    \begin{equation*}
        0 \le f(v) \le \abs{v}^{-m} \; \text{for all }v\in \R^d, \qquad f(v) = 0 \;\; \text{for all }v\in B_\delta, \qquad \text{and} \qquad f(e) = 1,
    \end{equation*}
    then $$\Q(f,f)(e) \leq C_0,$$ for some constant $C_0$ depending on $m$, $d$, $\delta$, and $\Q$.
\end{lemma}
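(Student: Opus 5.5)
The plan is a direct pointwise estimate at $e$, built on one observation. Since $f(v)\le |v|^{-m}$ everywhere with equality at $e$, the function $\varphi(v)\coloneqq|v|^{-m}$ touches $f$ from above at $e$. Also $\varphi$ is smooth on $B_{1/2}(e)$, with $\|\varphi\|_{C^2(B_{1/2}(e))}\lesssim_m 1$. Every "local" part of $\Q$ will be controlled by replacing $f$ with $\varphi$ near $e$. Every "nonlocal" part will be controlled by the pointwise bound $f\le \min(\delta^{-m},|v|^{-m})$, which holds because $f$ vanishes on $B_\delta$. The second bound makes $f\in L^1_k$ for every $k<m-d$, uniformly in $f$.

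\textbf{Landau.} I would use~\eqref{eq:Landau_alternative}. At the touching point, $D^2f(e)\le D^2\varphi(e)$ as matrices. Since $\bar a[f](e)$ is positive semidefinite, this gives $\bar a_{ij}\partial_{ij}f(e)\le \operatorname{tr}(\bar a[f](e))\,|D^2\varphi(e)|$. The trace is at most $\int |e-w|^{2+\gamma}f(w)\dd w$, which is bounded once $m>d+2+\gamma$; this is where $m_0=d+2+\gamma$ comes from. For the lower-order term, $f(e)=1$. If $\gamma=-d$, then $\bar c[f](e)$ is a multiple of $f(e)=1$. If $\gamma>-d$, then $\bar c[f](e)\lesssim\int|e-w|^\gamma f(w)\dd w$. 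I would split this integral at $|w-e|=1/2$: near $e$ the factor $|e-w|^\gamma$ is integrable and $f\le 2^m$, while far from $e$ it is bounded and $f\in L^1$. This finishes the Landau case.

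\textbf{Boltzmann, nonsingular part.} The term $\Qns(f,f)(e)=C_b\int f(e-w)|w|^\gamma\dd w$ is bounded exactly like $\bar c$.

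\textbf{Boltzmann, singular part.} For $\Qs$ I would use the third (Carleman) form in~\eqref{e.Qs}. The inner integral runs over the hyperplane $\{v' : v'-e\perp v'_*-e\}$, and its kernel $B_2$ depends only on $|e-v'_*|$ and $|e-v'|$. Hence it is invariant under $h=v'-e\mapsto -h$ within that hyperplane. Since the outer weight $f(v'_*)$ is nonnegative, I would bound the inner integral for each fixed $v'_*$ and split it at $|h|=1/4$.
\begin{itemize}
\item On $|h|<1/4$: use $f(e+h)-f(e)\le\varphi(e+h)-\varphi(e)$. Symmetrize in $h$ to cancel the first-order term. The remainder is bounded by $\|D^2\varphi\|\int|h|^2B_2\dd h$. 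The integrability condition~\eqref{eq:B_2 condition} controls this by roughly $r^{3+\gamma}|e-v'_*|^{-1}$, with $r^2=|e-v'_*|^2+|h|^2$. For $|h|<1/4$ this is $\lesssim\brak{e-v'_*}^{2+\gamma}$, up to the behaviour when $|e-v'_*|$ is small. The result is then integrable against $f(v'_*)$ for $m$ large.
\item On $|h|\ge 1/4$: simply drop $-f(e)$ and bound $f(v')\le\min(\delta^{-m},|v'|^{-m})$. This leaves the genuinely trilinear-looking integral
\[
\int f(v'_*)\int_{|h|\ge1/4} f(e+h)\,B_2\dd h\dd v'_* .
\]
\end{itemize}

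\textbf{Main obstacle.} I expect the last integral to be the main difficulty, through two regimes. The first is $|e-v'_*|\ll|h|$, where the deviation angle is near $\pi$. The second is $|v'|\approx\delta$ with $|v'_*|$ large, where nothing decays from $f(v')$. My first attempt would be to rewrite $B_2$ in terms of $\sin(\theta/2)=|h|/r$. I would then use the evenness of $b$ to trade angles near $\pi$ for angles near $0$, where~\eqref{e.collision_kernel} gives integrability. Finally I would pay for the remaining powers of $r$ with decay of $f(v'_*)$, taking $m_0$ large.

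If the near-$e$ singular contributions of $|e-v'_*|^{-1}$ cause trouble, I would instead switch to the first Carleman form for the part with $|v'_*-e|<1/4$. The reason is that there $f(v'_*)\le 2^m$ is bounded.
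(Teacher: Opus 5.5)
\textbf{Gap: the far-field part of $\Qs$ is never bounded.} Your Landau argument and your bound on $\Qns$ are the paper's argument. So is your treatment of $\Qs$ for $v'$ near $e$: touching by $|v|^{-m}$, cancelling the first-order term by the $h\mapsto -h$ invariance of $B_2$, then \eqref{eq:B_2 condition}. There is one minor slip: for $\gamma>0$ the factor $|e-w|^\gamma$ is not bounded away from $e$, so there you need $f\in L^1_\gamma$, i.e.\ $m>d+\gamma$, which you have.

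The real problem is the term $\int f(v'_*)\int_{|h|\ge 1/4} f(e+h)B_2\dd h\dd v'_*$. You flag it as the main obstacle but leave it open, and the route you sketch does not work as stated. Evenness of $b$ in $\theta$ does not relate $\theta$ to $\pi-\theta$. That exchange comes from $\sigma\mapsto-\sigma$, which swaps $v'$ and $v'_*$. This is a symmetry of the full $\QB$, so one may assume $b$ is supported in $\theta\le\pi/2$ before splitting into $\Qs+\Qns$. It is not a symmetry of $\Qs$ alone.

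\textbf{How the paper closes it.} The missing step uses the same tool you already used near $e$: multiply and divide by the weight in \eqref{eq:B_2 condition}. With $\rho=|e-v'_*|$ and $r^2=\rho^2+|h|^2$,
\[
\int_{|h|\ge 1/4} f(e+h)\,B_2\dd h
\le \sup_{|h|\ge 1/4}\Big(\frac{r^{3+\gamma}f(e+h)}{\rho\,|h|^2}\Big)\int \frac{|h|^2\rho}{r^{3+\gamma}}\,B_2\dd h
\lesssim \rho^{-1}+\rho^{2+\gamma}.
\]
This holds because $|h|^{-2}\lesssim 1$ on $|h|\ge 1/4$, and $f(e+h)\lesssim_\delta \brak{h}^{-m}$ absorbs the growth of $r^{3+\gamma}$ in $|h|$ once $m\ge 3+\gamma$. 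The right-hand side is integrable against $f(v'_*)\le\min(\delta^{-m},|v'_*|^{-m})$, since $d\ge 2$ and $m>d+2+\gamma$. This is the paper's \eqref{e.c3}--\eqref{e.c2}.

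\textbf{Your two regimes are not obstructions.} The regime $|e-v'_*|\ll|h|$ is absent after the symmetrization above. Even for $b$ bounded near $\theta=\pi$ without symmetrizing, it costs only the locally integrable factor $\rho^{-1}$. The regime $|v'|\approx\delta$ with $|v'_*|$ large is a small-angle regime costing $\rho^{2+\gamma}$, which the decay of $f(v'_*)$ pays for.

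Your instinct about angles near $\pi$ is still sound. For fixed $\rho$, the integrand of \eqref{eq:B_2 condition} behaves like a constant times $b(1)|h|^{1-d}$ as $|h|\to\infty$. That is not integrable on the $(d-1)$-dimensional hyperplane. So that condition tacitly relies on the reduction to $b$ vanishing near $\theta=\pi$.
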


\begin{remark}
    For Lemma \ref{lem:crude_bound_large_w_scaled} and consequently Lemma \ref{lem:crude_bound_large_w}, the choice $m_0 =  d + \gamma + 2$ suffices. In particular, at this stage, $m_0$ is simple and explicitly computable for both Boltzmann and Landau. In the Boltzmann case, the complicated restriction on $m_0$  comes from Lemma \ref{lem:removing_small_w}.
\end{remark}

\subsection{The reduction of Lemma \ref{lem:main_lemma}}
Before proceeding, let us briefly show how to combine Lemma \ref{lem:removing_small_w} and Lemma \ref{lem:crude_bound_large_w_scaled} to obtain the main lemma. 

\begin{proof}[Proof of Lemma \ref{lem:main_lemma}]
We first consider the case where $|v_0| \geq 1$.  Combining Lemma \ref{lem:removing_small_w} and Lemma \ref{lem:crude_bound_large_w} yields
\begin{equation}
    \Q(f,f)(v_0)
    \leq \Q(\uf,\uf)(v_0)
    \lesssim \alpha^2\abs{v_0}^{-2m+d+\gamma}
    \approx \bb(v_0)^2 \vvo^{d+\gamma}.
\end{equation}
The proof is complete in this case.

The case when $\abs{v_0} \leq 1$ is simpler because we do not need to account for the precise growth of $\Q(f,f)$. Indeed, one needs only show that $\Q(f,f) \lesssim 1$. This follows by a coarse estimate as in the proof of Lemma \ref{lem:crude_bound_large_w_scaled}.
\end{proof}

\section{A priori estimates for the Landau equation}\label{sec:Landau_a_priori}

In this section, we establish Lemma \ref{lem:removing_small_w} and Lemma \ref{lem:crude_bound_large_w_scaled} for the Landau equation. The argument for the Landau equation contains fewer technical details due to the more local nature of the collision operator.

\subsection{The nonlocal contribution of small velocities: Landau case}

We begin with the proof of the more subtle of the two main lemmas, which states that we may neglect collisions with small velocities.

\begin{proof}[Proof of Lemma \ref{lem:removing_small_w} when $\Q=\QL$]
    Recall that the Landau collision operator takes the form
    \begin{equation}
        \QL(f,f)(v) = \bar a_{ij} \partial_{ij} f(v) + \bar c f(v),
    \end{equation}
    where $\bar a$ and $\bar c$ are the convolutions defined in~\eqref{eq:defn_coeff}.
    Since $f$ and $\uf$ coincide in a neighborhood of $v_0$, $\QL(f,f)(v_0)$ and $\QL(\uf,\uf)(v_0)$ differ only in the value of $\bar a_{ij}(v_0)$ and $\bar c(v_0)$:
    \begin{equation}
    \begin{aligned}
        \QL(f,f)(v_0) - \QL(\uf, \uf)(v_0) &=  \QL(f - \uf,f)(v_0) - \QL(\uf,f- \uf)(v_0)
        \\&
        = \bar a_{ij}[f - \uf] \partial_{ij} f(v_0) + \bar c[f - \uf] f(v_0)
        \\&
        = \bar a_{ij}[g] \partial_{ij} f(v_0) + \bar c[g] f(v_0),
    \end{aligned}
    \end{equation}
    where we have implicitly introduced the notation
    \begin{equation}
        g = f- \uf.
    \end{equation}
    Note that $g$ is a nonnegative function that is supported in the ball of radius $\delta\abs{v_0}$ centered at the origin, so that the coefficients take the form
    \begin{equation*}
    \begin{aligned}
        \bar a_{ij}[g](v_0) &= \int_{B_{\delta\abs{v_0}}} \abs{v_0-w}^{2+\gamma} \Pi_{ij}(v_0-w)g(w) \dd w\\[5pt]
        \bar c[g](v_0) &= \begin{cases}
        (d-1)(d+\gamma)\int_{B_{\delta\abs{v_0}}} \abs{v_0-w}^\gamma g(w) \dd w &\text{if } \gamma > -d\\[4pt]
        0&\text{if } \gamma = -d,
        \end{cases}
    \end{aligned}
    \end{equation*}
    where the simplification when $\gamma = -d$ occurs because $\bar c[g]$ is a local operator in that case.
    By assumption, $\bb(v) - f(v)$ is a $C^2$ function that attains its minimum value at $v_0$.
    Thus,
    \begin{equation}
        f(v) \le \bb(v) \quad \text{for all }v\in\R^d \qquad \text{and} \qquad \nabla^2 f(v_0) \le \nabla^2 \bb(v_0) = \frac{m \bb(v_0)}{\abs{v_0}^2}\left[\frac{(m+2)v_0\otimes v_0}{\abs{v_0}^2} - \Id\right],
    \end{equation}
    using the specific form of $\bb(v_0)$ in \eqref{eq:barrier} and that $\abs{v_0} \ge 1$.
    Therefore,
    \begin{equation}\label{eq:integral_Landau}
    \begin{aligned}
        \QL(f,f)(v_0) &- \QL(\uf, \uf)(v_0) \leq \bar a_{ij}[g] \partial_{ij} \bb(v_0) + \bar c[g] \bb(v_0) \\
        &= \int_{B_{\delta \abs{v_0}}} f(w) \left( \abs{v_0-w}^{\gamma+2} \Pi_{ij}(v_0-w) \partial_{ij} \bb(v_0) + (d-1)(d+\gamma)|v_0-w|^\gamma \bb(v_0) \right) dw
    \end{aligned}
    \end{equation}
    To finish the proof, we claim that for any $m$ sufficiently large, there is a $\delta$ such that
    \begin{equation}\label{eq:integrand_Landau}
        \abs{v_0-w}^{\gamma+2} \Pi_{ij}(v_0-w) \partial_{ij} \bb(v_0) + (d-1)(d+\gamma)|v_0-w|^\gamma \bb(v_0) \leq 0, \qquad \text{for each }w \in B_{\delta\abs{v_0}}.
    \end{equation}
    As soon as we verify this claim, the integrand in \eqref{eq:integral_Landau} is non-positive and $\QL(f,f)(v_0) - \QL(\uf, \uf)(v_0) \leq 0$, which is the desired conclusion.

    We are left to verify the claim~\eqref{eq:integrand_Landau}. First, notice that $\Pi$ is a rank $d-1$ orthogonal projection, which implies that $\mathrm{tr}(\Pi) = d-1$.  Combined with the form of $\nabla^2\bb(v_0)$, we see that~\eqref{eq:integrand_Landau} reduces to the claim that
    \begin{equation}
    \label{eq:integrand_Landau_reduced}
        \frac{m\abs{v_0 - w}^{2}}{\abs{v_0}^2} \left[\frac{(m+2)\Pi(v_0-w)v_0 \cdot v_0}{\abs{v_0}^2} - (d-1)\right] + (d-1)(d+\gamma) \le 0, \qquad \text{for each }w\in B_{\delta\abs{v_0}}
    \end{equation}
    Since \eqref{eq:integrand_Landau_reduced} is invariant under rotations and scaling of $v_0$, we may assume $v_0 = e$, a fixed unit vector, which reduces \eqref{eq:integrand_Landau} further to the claim that
    \begin{equation}\label{eq:integrand_Landau_final}
        m\abs{e - w}^{2} \left[(m+2)\Pi(e-w)e \cdot e - (d-1)\right] + (d-1)(d+\gamma) \le 0, \qquad \text{for each }w\in B_{\delta}.
    \end{equation}
    Since $\Pi(e)e = 0$, \eqref{eq:integrand_Landau_final} is verified for $w = 0$, provided $m > d + \gamma$. Additionally, the left hand side of \eqref{eq:integrand_Landau_final} is evidently continuous in $w$. As a consequence, for $m>d+\gamma$, there exists $\delta > 0$ so that \eqref{eq:integral_Landau} holds.  This concludes the proof of Lemma \ref{lem:removing_small_w} for the Landau equation.
\end{proof}

\subsection{The contribution of large velocities: Landau case}
We proceed to prove Lemma \ref{lem:crude_bound_large_w_scaled} for the Landau equation. There is nothing subtle about this proof. We merely find a coarse estimate for $\QL(\uf, \uf)(e)$, where $e$ is a unit vector.

\begin{proof}[Proof of Lemma \ref{lem:crude_bound_large_w_scaled} when $\Q=\QL$]
    We will bound each term in the nondivergence form of $\QL$:
    \begin{equation*}
        \QL(f, f)(e) = \bar a_{ij}[f] \partial_{ij} f(e) + \bar c[f] (e).
    \end{equation*}
    Note first that $f(e) = 1$. Thus, we only need to estimate $\bar a_{ij}[f]$, $\bar c[f]$, and $\nabla^2 f(e)$. We begin with $\nabla^2f(e)$. By assumption, $f-\bb(v)$ is a $C^2$ function that obtains its maximum value at $e$, which implies
    \begin{align*}
        \nabla^2 f(e)
        &\leq \nabla^2 \bb(e) = m(m+2)(e\otimes e) - m\Id,
    \end{align*}
    where the inequality holds as symmetric matrices.
    We proceed to estimate $\bar a_{ij}[f]\partial_{ij}f$:
    \begin{align*}
        \bar a_{ij}[f]
        \partial_{ij} f(e)
        &\le m\int_{\R^d} \abs{e-w}^{2+\gamma} \left((m+2)\Pi(e-w)e\cdot e - (d-1)\right) f(w) \dd w
        \\&
        \leq m(m+2)\int_{\R^d\setminus B_\delta} \abs{e-w}^{2+\gamma} \abs{w}^{-m} \dd w
        \lesssim 1,
    \end{align*}
    provided that $m > d + \gamma +2$.
    Lastly, we estimate $\bar c[f]$. In the case $\gamma = -d$, we simply have $\bar c[f](e) \approx f(e) = 1$. For $\gamma > -d$, we have
    \begin{align*}
        \bar c[f] &= (d-1) (d+\gamma) \int_{\R^d} \abs{e-w}^\gamma f(w) \dd w \\
        &\leq (d-1) (d+\gamma) \int_{\R^d \setminus B_\delta} \abs{e-w}^\gamma |w|^{-m} \dd w
        \lesssim 1,
    \end{align*}
    provided that $m > d + \gamma$. Combining these bounds concludes the proof for the Landau equation.
\end{proof}

\section{A priori estimates for the Boltzmann equation}\label{sec:Boltzmann_a_priori}

We are left to prove Lemma \ref{lem:removing_small_w} and Lemma \ref{lem:crude_bound_large_w_scaled} for the Boltzmann equation. This case is more involved than that of the Landau equation because of the more nonlocal structure of the Boltzmann collision operator. In particular, the delicate step is properly isolating \emph{all} of the nonlocal contributions in the proof of Lemma \ref{lem:removing_small_w}.

For our initial computations, let us assume the angular cross section $b$ is integrable, allowing us to separate the collision operator into gain and loss terms and manipulate each individually. We will derive an equality for the difference $\QB(f,f)(v_0) - \QB(\uf,\uf)(v_0)$, which will be the starting point for our analysis. This equality holds for any collision cross section $b$, including non-integrable ones. The latter can be seen by a limiting procedure, approximating non-integrable cross sections by a sequence of integrable ones.

\subsection{The nonlocal contribution of small velocities: Boltzmann case}

\begin{proof}[Proof of Lemma \ref{lem:removing_small_w} when $\Q=\QB$]
We proceed in a series of steps.  The main idea is similar to that of the Landau case.  We rewrite the difference of the collision operators until it is obvious that the claim is true when $\delta=0$ and conclude by continuity.

\medskip
\noindent
{\bf Step 1: a change of variables.}
We begin by expanding the difference $\QB(f,f)(v_0) - \QB(\uf,\uf)(v_0)$. Let us recall that
\begin{equation}
    \underline{f}(v_0) = f(v_0)
    \qquad \text{ and }\qquad
    \underline{f}(w) = f(w)
        \quad\text{ if } |w|\geq \delta |v_0|.
\end{equation}
Hence, we obtain
\begin{equation*}
\begin{split}
    \QB(f,f)&(v_0) - \QB(\uf,\uf)(v_0)\\
    &\quad= \iint [f(v')f(v'_*) - f(v_0)f(v_*)]B \dd \sigma \dd v_* - \iint \left[\underline{f}(v')\underline{f}(v_*') - f(v_0)\underline{f}(v_*) \right]B \dd \sigma \dd v_*\\
    &\quad = - f(v_0)\iint \left[f(v_*) - \underline{f}(v_*)\right] B \dd \sigma \dd v_* + \iint \left[f(v') - \underline{f}(v')\right]f(v_*') B \dd \sigma \dd v_*\\
    &\quad\qquad+\iint \underline{f}(v')\left[f(v_*') - \underline{f}(v_*')\right]B\dd \sigma \dd v_*\\
    &\quad = - f(v_0)\iint_{v_* \in B_{\delta\abs{v_0}}}\limits f(v_*) B \dd \sigma \dd v_* + \iint_{v' \in B_{\delta\abs{v_0}}}\limits f(v')f(v_*') B \dd \sigma \dd v_*
        +\iint_{\substack{{v_*' \in B_{\delta\abs{v_0}}}\\v'\notin B_{\delta\abs{v_0}}}}\limits f(v')f(v_*')B\dd \sigma \dd v_*\\[-10pt]
    &\quad\coloneqq I_1 + I_2 + I_3.
\end{split}
\end{equation*}
In the last integral, the second condition $v' \notin B_{\delta\abs{v_0}}$ is redundant since $v'$ and $v_*'$ cannot both belong to $B_{\delta\abs{v_0}}$ if $\delta < 1/2$; see~\eqref{e.v'_v_*'_sigma}. Observe that $I_1<0$.  The goal of the proof, then, is to show that $I_1$ dominates. This is plausible because $I_1$ contains $f(v_0)$, which is ``large'' in an appropriate sense compared to $f(v)$ for other $v\neq v_0$.  More precisely, $v_0$ is the location of a minimum of the function $\bb - f$.

Next, we change variables in each integral to factor out the values $f(w)$ for $w \in B_{\delta \abs{v_0}}$.  The change of variables used in each integral $I_i$ is different. For $I_1$, we use a stereographic projection. For $I_2$ and $I_3$, we will use the Carleman coordinates introduced in Section \ref{sec:preliminaries}; these changes of variables are classical and references for the associated Jacobians are provided there.

We start with $I_2$, where we change variables to integrate with respect to $v_*'$ and $v'$:
\begin{align*}
    I_2 &= \int_{B_{\delta \abs{v_0}}} f(v')\int_{(v' - v_0) \perp (v'_* - v_0)}  f(v'_*) \abs{v-v_*}^\gamma \ b(\sin\theta/2) \frac{2^{d-1}}{r^{d-2} |v_0-v'|} \dd v'_*\dd v' \\
    &= 2^{d-1} \int_{B_{\delta \abs{v_0}}} f(v')\int_{(v' - v_0) \perp (v'_* - v_0)}  f(v'_*) \ \frac{r^{2-d+\gamma} }{|v_0-v'|} \ b\left(\sin(\theta/2)\right) \dd v'_*\dd v',
\end{align*}
where $r$ is the diameter of the associated collision sphere:
\begin{equation}
    r^2 = \abs{v_0-v'}^2 + \abs{v_0 - v'_*}^2 = |v'-v_*'|^2 = \abs{v-v_*}^2 \qquad \text{and} \qquad \sin(\theta/2) = \frac{|v_0-v'|}{r}
\end{equation}
For consistency with later integrals, we rename the variables of integration as $(w,z) \coloneqq (v',v'_*)$:
\begin{equation}
        I_2 = 2^{d-1}\int_{B_{\delta \abs{v_0}}} f(w)\int_{(w - v_0) \perp (z - v_0)}  f(z)  \ \frac{r^{2-d+\gamma}}{|v_0-w|} \ b\left(\frac{|v_0-w|}{r}\right) \dd z\dd w.
\end{equation}
In these new variables, we note that $r=r(z,w)$ takes the form
\begin{equation}\label{e.r_theta_vw}
    r^2
    = \abs{v_0-w}^2 + \abs{v_0 - z}^2
    = \abs{z-w}^2.
\end{equation}
For $I_3$, we perform a similar change of variables but for the outside integration with respect to $v'_*$:
\begin{equation}
\begin{aligned}
    I_3 &= 2^{d-1} \int_{B_{\delta \abs{v_0}}} f(v'_*)\int_{(v' - v_0) \perp (v'_* - v_0)}  f(v') \  \frac{r^{2-d+\gamma} }{|v_0-v'_\ast|} \ b\left(\sin\theta/2\right) \dd v'\dd v'_*.
\end{aligned}
\end{equation}
Renaming the variables of integration as $(w,z) \coloneqq (v'_*,v')$ and using the same formula for $\sin(\theta/2)$ and $r$ results in
\begin{equation*}
\begin{aligned}
    I_3
    &= 2^{d-1}\int_{B_{\delta \abs{v_0}}} f(w)\int_{(z - v_0) \perp (w - v_0)}  f(z) \frac{r^{2-d+\gamma}}{|v_0-w|} \  b\left(\frac{|v_0-z|}{r}\right) \dd z\dd w.
\end{aligned}
\end{equation*}
It remains to change variables in $I_1$. For $I_1$, instead of Carleman coordinates, we use a stereographic projection to the same hyperplane that $I_2$ and $I_3$ are integrated over. First, we rename $w = v_*$ so that $I_1$ becomes
\begin{equation*}
    I_1 = - f(v_0)\int_{B_{\delta\abs{v_0}}} f(w) \int_{\S^{d-1}} \abs{v_0-w}^\gamma \ b(\sin\theta/2) \dd \sigma \dd w.
\end{equation*}
Second, the vector $\sigma$ is mapped by stereographic projection from $\S^{d-1}$ taken with north pole $w$ and south pole $v_0$ to a point $z$ on the plane through $v_0$ orthogonal to $v_0 - w$, i.e. $(z - v_0) \perp (w -v_0)$.  See Figure \ref{fig:stereographic_projection}.
\begin{figure}
\centering
\begin{tikzpicture}[scale=.95]
    \def\r{2} 
    \coordinate (W) at (0,\r);
    \coordinate (V) at (0,-\r);
    \coordinate (V') at (.95*\r,-.3122*\r); 
    \coordinate (VSTAR') at ($-1*(V')$);
    \coordinate (H) at (1.5,\r+0.5);
    \draw (0,0) circle (\r);

    \draw[fill] (W) circle (1.5pt) node [above left] {$v_* = w$};
    \draw[fill] (V) circle (1.5pt) node [below left] {$v_0$};
    \draw[-] (0, \r) -- (0, -\r);

    \draw[fill, magenta] (V') circle (1.5pt) node[right] {$v'$};
    \draw[dotted, thick, magenta] (0,0) -- (V');
    \draw[fill] (VSTAR') circle (1.5pt) node[above left] {$v_*'$};
    \draw[dotted] (0,0) -- (VSTAR');
    \draw[->, magenta, thick] (0,0) -- ($.35*(V')$) node[below left] {$\sigma$};

    \coordinate (P1) at (-4, -\r);
    \coordinate (P2) at (4, -\r);
    \draw[thick,<->,DarkGreen] (P2) -- (P1) node[above right] {$H$};

    \coordinate (ZDIR) at ($(V')-(W)$);
    \draw[blue, dashed] (W) -- ($(W)+1.525*(ZDIR)$);
    \draw[fill, blue] ($(W) + 1.525*(ZDIR)$) circle (1.5pt) node[above right] {$z$};

    \draw[thick,-,opacity=0] (P1) -- (P2) node[midway, below] {$H_{\rm near}$};

\end{tikzpicture}
\caption{A figure showing the stereographic projection used in defining the integral $I_1$.  We note that the sphere in this figure is the Boltzmann sphere containing the original variables $v_0$, $v_* = w$, $v'$, and $v_*'$ whereas $H$ is the hyperplane $v_0 + w^\perp$.}
\label{fig:stereographic_projection}
\end{figure}
Explicitly, we have the formula relating $v_0$, $w$, $\sigma$, and the new coordinate $z$:
\begin{equation*}
    z = w + \frac{v_0 - w + \abs{v_0 - w}\sigma}{1 + \sigma \cdot \left(\dfrac{v_0-w}{\abs{v_0 - w}}\right)},
\end{equation*}
resulting in the following expression for $I_1$:
\begin{equation*}
\begin{aligned}
    I_1
    &= - f(v_0) \int_{B_{\delta\abs{v_0}}} f(w) \int_{(z-v_0)\perp (w-v_0)} \abs{w-v_0}^\gamma \ b(\sin \theta/2) \frac{2^{d-1}\abs{v_0-w}^{d-1}}{\abs{z-w}^{2(d-1)}} \dd z \dd w.
\end{aligned}
\end{equation*}
Here, we observe that $\sin(\theta/2) = |v_0-z|/|z-w|$. We write $r = |z-w|$ for consistency with the other integrals. We also note that $\sin(\theta/2) = |v_0-z| / r$. The final expression for $I_1$ is
\begin{equation*}
\begin{aligned}
    I_1
    &= - 2^{d-1} \int_{B_{\delta\abs{v_0}}} f(w) \int_{(z-v_0)\perp (w-v_0)} f(v_0) \ b\left(\frac{\abs{v_0-z}}{r}\right) \frac{\abs{v_0-w}^{\gamma + d-1}}{r^{2(d-1)}} \dd z \dd w.
\end{aligned}
\end{equation*}
Taken together, the changes of variables to the three integrals yield the following expression:
\begin{equation} \label{eq:I123}
\begin{split}
    &\QB(f,f)(v_0) - \QB(\uf,\uf)(v_0) = I_1 + I_2 + I_3 \\&
    \qquad
    = 2^{d-1}\int_{B_{\delta\abs{v_0}}} \frac{f(w)}{|v_0-w|} \int_{(z-v_0)\perp(w-v_0)}  \Bigg[\left(f(z) r^{2-d+\gamma} - f(v_0)\frac{\abs{v_0-w}^{\gamma + d}}{r^{2(d-1)}} \right) \ b\left(\frac{\abs{v_0-z}}{r}\right)
    \\& \hspace{3.5in}
    + f(z) r^{2-d+\gamma} \ b\left(\frac{\abs{v_0-w}}{r}\right)\Bigg] \dd z \dd w .
\end{split}
\end{equation}
We point out that while $I_1$ and $I_3$ would both be $+\infty$ in the case of a non-cutoff kernel, the final expression on the right hand side above is well-defined and finite: The singularity of the kernel is exactly canceled by the vanishing of the difference
\begin{equation}\label{e.c041801}
    f(z)r^{2-d+\gamma} - f(v_0)\frac{\abs{v_0-w}^{\gamma + d}}{r^{2(d-1)}}
\end{equation}
as $z\to v_0$ along the hyperplane $\set{z \mid (z-v_0)\cdot (w-v_0) = 0}$.

\medskip
\noindent
{\bf Step 2: simplifying the main inequality by scaling.}
In order to prove that $\QB(f,f)(v_0) - \QB(\uf,\uf)(v_0)$ is negative for $\delta$ sufficiently small, we analyze the expression \eqref{eq:I123}. The values of $f(w)$ for $w \in B_{\delta |v|}$ are arbitrary positive numbers. Hence, we need to show that the inner integrals are negative for $\delta$ sufficiently small. Indeed, we see that the proof of Lemma \ref{lem:removing_small_w} is complete once we establish the negativity of
\begin{equation}\label{eq:goal_ineq}
\begin{aligned}
    &\int_{(z-v_0)\perp(w-v_0)}  \Big[\Big(f(z) r^{2-d+\gamma} - f(v_0)\frac{\abs{v_0-w}^{\gamma + d}}{r^{2(d-1)}} \Big) \ b\Big(\frac{\abs{v_0-z}}{r}\Big) + f(z) r^{2-d+\gamma} \ b\Big(\frac{\abs{v_0-w}}{r}\Big)\Big] \dd z
\end{aligned}
\end{equation}
for $m$ sufficiently large and $\delta$ sufficiently small.  We recall the $m$ dependence in~\eqref{eq:goal_ineq} is through the definition of $\bb$ and the assumption that $f \leq \bb$.

Here, the key assumptions are that $f(v_0) = \bb(v_0)$ and $f(z) \leq \bb(z)$ for every $z \in \R^d$.
Moreover, since we assume that $|v_0|\geq 1$ and $|w| < \delta |v_0|$ with $\delta$ small, each hyperplane uniformly avoids $0$. More precisely, for $\delta$ sufficiently small, $|z| \geq 1/2$ for all $(z-v_0) \cdot (w-v_0) = 0$. Recalling the form of the barrier from ~\eqref{eq:barrier},
\begin{equation}
    f(z) \leq \bb(z) = \alpha |z|^{-m}
        \qquad\text{ for all } (z-v_0)\perp (w-v_0),
\end{equation}
and
\begin{equation}
    f(v_0) = \bb(v_0) = \alpha |v_0|^{-m}.
\end{equation}
Plugging these into the expression~\eqref{eq:goal_ineq} yields
\begin{align*}
    &\int_{(z-v_0)\perp(w-v_0)}  \Big[\Big(f(z) r^{2-d+\gamma} - f(v_0)\frac{\abs{v_0-w}^{\gamma + d}}{r^{2(d-1)}} \Big) \ b\Big(\frac{\abs{v_0-z}}{r}\Big) + f(z) r^{2-d+\gamma} \ b\Big(\frac{\abs{v_0-w}}{r}\Big)\Big]  \dd z \\
    &\leq
    \alpha \int_{(z-v_0)\perp(w-v_0)}  \Big[\Big( |z|^{-m} r^{2-d+\gamma} -  |v_0|^{-m}\frac{\abs{v_0-w}^{\gamma + d}}{r^{2(d-1)}} \Big) \ b\Big(\frac{\abs{v_0-z}}{r}\Big) +  |z|^{-m} r^{2-d+\gamma} \ b\Big(\frac{\abs{v_0-w}}{r}\Big)\Big] \dd z.
\end{align*}
Recalling that $r = \abs{z-w}$, the right hand side scales simply provided $z$, $w$, and $v_0$ are scaled identically. Indeed, making the change of variables $\widetilde v_0 \coloneqq \lambda v_0$, $\widetilde w \coloneqq \lambda w$, and $\widetilde z \coloneqq \lambda z$, the right hand side is simply multiplied by $\lambda^{\gamma - m + 1}$. Hence, we can assume without loss of generality that $v_0$ is a unit vector, i.e., $v_0 = e$, and establish non-positivity in that case. Likewise, we assume without loss of generality that $\alpha = 1$. Hence, we need to show that
\begin{equation}\label{eq:goal_ineq_scaled}
\begin{aligned}
    &\int_{(z-e)\perp(w-e)}  \left[\left(|z|^{-m} r^{2-d+\gamma} - \frac{\abs{e-w}^{\gamma + d}}{r^{2(d-1)}} \right) \ b\left(\frac{\abs{e-z}}{r}\right) +  |z|^{-m} r^{2-d+\gamma} \ b\left(\frac{\abs{e-w}}{r}\right)\right] \dd z \leq 0.
\end{aligned}
\end{equation}
The proof of Lemma \ref{lem:removing_small_w} is reduced to proving \eqref{eq:goal_ineq_scaled} for $|w|<\delta$ for $\delta$ sufficiently small.

\medskip
\noindent
{\bf Step 3: obtaining the inequality by continuity.}
The expression on the left-hand side of \eqref{eq:goal_ineq_scaled} is continuous with respect to $w$ around the origin. There is no difficulty in verifying this by analyzing the integrand term-by-term and using the dominated convergence theorem, even though writing down the full details is somewhat tedious. In the case of a non-cutoff cross section $b$, we have to pay attention to the cancellation of the singularity of the kernel with the vanishing of the difference
\begin{equation}
    |z|^{-m} \abs{z-w}^{2-d+\gamma} - \frac{\abs{e-w}^{\gamma + d}}{\abs{z - w}^{2(d-1)}}
\end{equation}
as $z \to e$. Due to this continuity, we can verify \eqref{eq:goal_ineq_scaled} by checking that for $m$ sufficiently large, the left-hand side is negative at $w=0$. Note that when $w=0$, $|z| = r$. Hence, we need to show that
\begin{equation}\label{eq:goal_ineq_origin}
\begin{aligned}
    &\int_{(z-e)\perp e}  \left[\left(|z|^{-m + 2-d+\gamma} - \abs{z}^{-2(d-1)} \right) \ b\left(\frac{\abs{e-z}}{|z|}\right) +  |z|^{-m + 2-d+\gamma} \ b\left(\frac{1}{|z|}\right)\right] \dd z < 0.
\end{aligned}
\end{equation}

Recall that the domain of integration is the hyperplane $H_0 = e + e^\perp$. On this hyperplane, $|z|>1$ except when $z=e$. The term $|z|^{-m + 2-d+\gamma}$ converges to zero monotonically as $m \to \infty$ almost everywhere on $H_0$. The negative term $-|z|^{-2(d-1)}$ is independent of $m$. Hence, by the monotone convergence theorem, the left-hand side of \eqref{eq:goal_ineq_origin} converges to
\begin{equation*}
\begin{aligned}
    &\int_{(z-e)\perp e}  -|z|^{-2(d-1)} \ b\left(\frac{\abs{e-z}}{|z|}\right) \dd z < 0.
\end{aligned}
\end{equation*}
This verifies \eqref{eq:goal_ineq_origin} (and, thus, also \eqref{eq:goal_ineq_scaled} and \eqref{eq:goal_ineq}) and concludes the proof of Lemma \ref{lem:removing_small_w}.
\end{proof}

\subsection{The contribution of large velocities: Boltzmann case.}

We now prove Lemma \ref{lem:crude_bound_large_w_scaled} for the Boltzmann equation. We again find a crude estimate for $\QB(f,f)(e)$, where $e$ is a unit vector and $f$ is any function satisfying the assumptions of Lemma \ref{lem:crude_bound_large_w_scaled}. This time, we will use the standard decomposition of Boltzmann operator $\Q = \Qs + \Qns$ introduced in Section \ref{sec:preliminaries}.

\begin{proof}[Proof of Lemma \ref{lem:crude_bound_large_w_scaled} when $\Q=\QB$]
We will bound both terms in the decomposition of $\QB$ separately:
\begin{equation}
    \QB(f,f)(e) = \Qs(f,f)(e) + \Qns(f,f)(e),
\end{equation}
beginning with the more straightforward estimate of $\Qns$. The assumptions on $f$ guarantee
\begin{equation}\label{e.uf_upper_bound}
    0 \le f(w) \le \abs{w}^{-m}, \qquad f(e) = 1, \qquad \text{and} \qquad f(w) = 0 \;\;\text{if }w\in B_\delta.
\end{equation}
Recalling the form of $\Qns$ from \eqref{e.Qns}, we immediately deduce
\begin{equation}\label{e.Qns_estimate}
    \Qns(f,f)(e) = C_b f(e) \int_{\R^d} f(w) \abs{e-w}^\gamma \dd w \lesssim \int_{\R^d\setminus B_\delta} \abs{w}^{-m}\abs{e-w}^{\gamma} \dd w
    \lesssim 1,
\end{equation}
where the constant depends only on $m$, $\delta$, and $\QB$.

We now estimate $\Qs$, which is a bit more delicate. Recalling the form of $\Qs$ from \eqref{e.Qs}, $\Qs(f,f)(e)$ takes the form
\begin{align*}
    \Qs(f,f)(e) &= \int_{\R^d} f(v_*')\left( \int_{v'_*-e \perp v'-e}  \left(f(v') - f(e)\right) B_2(|e-v_*'|, |e-v'|) \dd v' \right) \dd v_*' 
\end{align*}
The resulting integral has a potential non-integrable singularity only when $v' = e$, corresponding to $\theta = 0$ in the original formulation of the Boltzmann equation \eqref{eq:Boltzmann}. Consequently, we analyze $v'$ near $e$ and $v'$ far from $e$ separately.

We begin with the case when $v'$ is close to $e$. Using that $f(v) - \abs{v}^{-m}$ obtains its maximum of $0$ at $e$, the second order Taylor expansion of $|v|^{-m}$ gives:
\begin{equation}
     f(v') - f(e) \le |v'|^{-m} - 1 = -m e \cdot (v'-e) + O(|v'-e|^2).
\end{equation}
The first order term is odd with respect to $v'-e$ and vanishes in the integral. Using the estimate~\eqref{eq:B_2 condition} on $B_2$, we find
\begin{equation}\label{e.c1}
\begin{split}
\int_{\substack{v'-e \perp v_*'-e \\ |v'-e|\leq1/2 }}\limits  \left(f(v') - f(e) \right) B_2(|e-v_*'|, |e-v'|) \dd v'
    &\lesssim \int_{\substack{v'-e \perp v_*'-e \\ |v'-e|\leq 1/2 }}\limits  |v'-e|^2 B_2(|e-v_*'|, |e-v'|) \dd v'
    \\ &\lesssim 1 + |v'_*-e|^{\gamma+2}.
\end{split}
\end{equation}

Away from the potential singularity, for $|v'-e|>1/2$, we drop the negative term and estimate more coarsely. Recall that $f(v) = 0$ for $|v| < \delta$, so that on this domain $f(v') \lesssim \langle v'\rangle^{-m}$.  Hence,
\begin{equation}\label{e.c3}
\begin{split}
\int_{\substack{v'-e \perp v_*'-e \\ |v'-e| > 1/2 }}\limits  &\left( f(v') - f(e)\right) B_2(|e-v_*'|, |e-v'|) \dd v'
    \leq \int_{\substack{v'-e \perp v_*'-e \\ |v'-e| > 1/2 }}\limits  \langle v'\rangle^{-m} B_2(|e-v_*'|, |e-v'|) \dd v'
    \\&
    = \int_{\substack{v'-e \perp v_*'-e \\ |v'-e| > 1/2 }}\limits  \frac{r^{3+\gamma}}{|e- v_*'||v'-e|^2 \langle v'\rangle^{m}} \left(\frac{|v'-e|^2|v_*'-e|}{r^{3+\gamma}} B_2(|e-v_*'|, |e-v'|)\right) \dd v'
    \\&
    \lesssim \max_{v'}\left(\frac{r^{3+\gamma}}{|e- v_*'| \langle v'\rangle^{m+2}} \right)\int_{\substack{v'-e \perp v_*'-e \\ |v'-e| > 1/2 }}\limits   \left(\frac{|v'-e|^2|v_*'-e|}{r^{3+\gamma}} B_2(|e-v_*'|, |e-v'|)\right) \dd v'.
\end{split}
\end{equation}
Recalling that $|v'-e|^2 + |v_*'-e|^2 = r^2$ on the hyperplane $v'-e\perp v'_* - e$, we see that
\begin{equation}
    \max_{v'}\left(\frac{r^{3+\gamma}}{|e- v_*'| \langle v'\rangle^{m+2}} \right)
    \lesssim \frac{1}{|v_*'-e|} + |v_*'-e|^{2+\gamma}.
\end{equation}
Combining this with the integrability condition~\eqref{eq:B_2 condition} on $B_2$ and the inequality~\eqref{e.c3}, we find
\begin{equation}\label{e.c2}
    \int_{\substack{v'-e \perp v_*'-e \\ |v'-e| > 1/2 }}\limits  \left( f(v') - f(e)\right) B_2(|e-v_*'|, |e-v'|) \dd v'
    \lesssim \frac{1}{|v_*'-e|} + |v_*'-e|^{2+\gamma}.
\end{equation}

Combining the bounds~\eqref{e.c1} and~\eqref{e.c2}, we obtain
\[
    Q_s(f,f)(e)
    \lesssim \int_{\R^d} f(v'_*) (|v_*'-e|^{-1} + |v'_*-e|^{\gamma+2} ) \dd v'_* \lesssim 1.
\]
This final integral is bounded since
\[ f(v_*') \leq \begin{cases}
    |v_*'|^{-m} & \text{for } |v'_*| > \delta \\
    0 & \text{for } |v'_*| \leq \delta.
 \end{cases} \]
This, along with~\eqref{e.Qns_estimate}, completes the proof.
\end{proof}

\section{Deriving
Theorem \ref{thm:main_continuation} from known results and Theorem \ref{thm:main_propagation}}\label{sec:continuation}

\begin{proof}[Proof of Theorem \ref{thm:main_continuation}]
This is immediate from Theorem \ref{t:local-well-posedness-Landau-soft} and Theorem \ref{t:local-well-posedness-Boltzmann-soft}.  Indeed, by assumption, the $L^1_t L^\infty_{d+\gamma}$-norm of $f$ is controlled and Theorem \ref{thm:main_propagation} implies that the $L^\infty_t L^\infty_m$-norm of $f$ is controlled. The above-cited continuation criteria then apply.
\end{proof}

\begin{proof}[Proof of Corollary \ref{cor:lower_bound_blowup_rate}]

    Fix $m > m_0$ and $C = C(m,Q)$ the corresponding constant from Theorem \ref{thm:main_propagation}. Define $y(t) = \norm{f(t)}_{L^\infty_m}$ for some $m > m_0$. Then, using the translation invariance (in time) of \eqref{eq:kinetic}, Theorem \ref{thm:main_propagation} yields an integral inequality for $y$: \begin{equation}
        \text{for any }0<s<t<T, \qquad y(t) \le y(s) \exp\left(C\int_s^t \norm{f(\tau)}_{L^\infty_{d+\gamma}} \dd \tau\right) \le y(s) \exp\left(C\int_s^t y(\tau) \dd \tau\right).
    \end{equation}
    Taking a difference quotient and passing to the limit (using the qualitative regularity of $f$), yields the following Riccati-type differential inequality for $y$:
    \begin{equation}
        \text{for any }0 < \tau < T, \qquad y'(\tau) \le Cy(\tau)^2.
    \end{equation}
    Solving the associated differential equation, we obtain
    \begin{equation}
     \text{for any }0 < s < t < T, \qquad -\frac{1}{y(t)} + \frac{1}{y(s)} \le C(t-s).
    \end{equation}
    Using the assumption that $\displaystyle \limsup_{t\to T^-} y(t) = +\infty$, we conclude
    \begin{equation}
        y(s) \ge \frac{1}{C(T-s)}.
    \end{equation}
    The proof is complete.

\end{proof}

\bibliographystyle{abbrv}
\bibliography{LandauRefs}

@article {AMUXY4,
    AUTHOR = {Alexandre, Radjesvarane and Morimoto, Yoshinori and Ukai,
              Seiji and Xu, Chao-Jiang and Yang, Tong},
     TITLE = {Local existence with mild regularity for the {B}oltzmann
              equation},
   JOURNAL = {Kinet. Relat. Models},
  FJOURNAL = {Kinetic and Related Models},
    VOLUME = {6},
      YEAR = {2013},
    NUMBER = {4},
     PAGES = {1011--1041},
      ISSN = {1937-5093,1937-5077},
   MRCLASS = {35Q20 (35A01 35B65 76P05 82C40)},
  MRNUMBER = {3177640},
MRREVIEWER = {Linjie\ Xiong},
       DOI = {10.3934/krm.2013.6.1011},
       URL = {https://doi.org/10.3934/krm.2013.6.1011},
}

@article {AMUXY3,
    AUTHOR = {Alexandre, Radjesvarane and Morimoto, Yoshinori and Ukai,
              Seiji and Xu, Chao-Jiang and Yang, Tong},
     TITLE = {Uniqueness of solutions for the non-cutoff {B}oltzmann
              equation with soft potential},
   JOURNAL = {Kinet. Relat. Models},
  FJOURNAL = {Kinetic and Related Models},
    VOLUME = {4},
      YEAR = {2011},
    NUMBER = {4},
     PAGES = {919--934},
      ISSN = {1937-5093,1937-5077},
   MRCLASS = {35Q20 (35A02 35D30)},
  MRNUMBER = {2861580},
MRREVIEWER = {Xianpeng\ Hu},
       DOI = {10.3934/krm.2011.4.919},
       URL = {https://doi.org/10.3934/krm.2011.4.919},
}

@article {AMUXY1,
    AUTHOR = {Alexandre, Radjesvarane and Morimoto, Yoshinori and Ukai,
              Seiji and Xu, Chao-Jiang and Yang, Tong},
     TITLE = {Regularizing effect and local existence for the non-cutoff
              {B}oltzmann equation},
   JOURNAL = {Arch. Ration. Mech. Anal.},
  FJOURNAL = {Archive for Rational Mechanics and Analysis},
    VOLUME = {198},
      YEAR = {2010},
    NUMBER = {1},
     PAGES = {39--123},
      ISSN = {0003-9527,1432-0673},
   MRCLASS = {82C40 (35Q20 76P05)},
  MRNUMBER = {2679369},
MRREVIEWER = {Silvia\ Lorenzani},
       DOI = {10.1007/s00205-010-0290-1},
       URL = {https://doi.org/10.1007/s00205-010-0290-1},
}

@article {ImbertMouhotSilvestre_decay,
    AUTHOR = {Imbert, Cyril and Mouhot, Cl\'ement and Silvestre, Luis},
     TITLE = {Decay estimates for large velocities in the {B}oltzmann
              equation without cutoff},
   JOURNAL = {J. \'Ec. polytech. Math.},
  FJOURNAL = {Journal de l'\'Ecole polytechnique. Math\'ematiques},
    VOLUME = {7},
      YEAR = {2020},
     PAGES = {143--184},
      ISSN = {2429-7100,2270-518X},
   MRCLASS = {35Q20 (35B40 76P05)},
  MRNUMBER = {4033752},
MRREVIEWER = {Bertrand\ Lods},
       DOI = {10.5802/jep.113},
       URL = {https://doi.org/10.5802/jep.113},
}

@incollection {Golse_hydrodynamic,
    AUTHOR = {Golse, Fran\c{c}ois},
     TITLE = {Hydrodynamic limits},
 BOOKTITLE = {European {C}ongress of {M}athematics},
     PAGES = {699--717},
 PUBLISHER = {Eur. Math. Soc., Z\"urich},
      YEAR = {2005},
      ISBN = {3-03719-009-4},
   MRCLASS = {82C40 (35F20 35Q35 76-02 76A02 76P05 82-02)},
  MRNUMBER = {2185776},
}

@article{li2026local_hardpotentials,
  title={Local well-posedness for the {B}oltzmann equation with hard potentials},
  author={Li, Hao-Guang and Li, Wei-Xi and Xu, Chao-Jiang},
  journal={arXiv preprint arXiv:2602.19148},
  year={2026}
}

@article {HST_Boltzmann_existence,
    AUTHOR = {Henderson, Christopher and Snelson, Stanley and Tarfulea,
              Andrei},
     TITLE = {Classical solutions of the {B}oltzmann equation with irregular
              initial data},
   JOURNAL = {Ann. Sci. \'Ec. Norm. Sup\'er. (4)},
  FJOURNAL = {Annales Scientifiques de l'\'Ecole Normale Sup\'erieure.
              Quatri\`eme S\'erie},
    VOLUME = {58},
      YEAR = {2025},
    NUMBER = {1},
     PAGES = {107--201},
      ISSN = {0012-9593,1873-2151},
   MRCLASS = {35Q20 (35A09 82C40)},
  MRNUMBER = {4895262},
}

@article {SnelsonTaylor,
    AUTHOR = {Snelson, Stanley and Taylor, Shelly Ann},
     TITLE = {Existence of smooth solutions to the {L}andau equation with
              hard potentials and irregular initial data},
   JOURNAL = {J. Stat. Phys.},
  FJOURNAL = {Journal of Statistical Physics},
    VOLUME = {192},
      YEAR = {2025},
    NUMBER = {10},
     PAGES = {Paper No. 142},
      ISSN = {0022-4715,1572-9613},
   MRCLASS = {35 (82)},
  MRNUMBER = {4970362},
       DOI = {10.1007/s10955-025-03525-7},
       URL = {https://doi.org/10.1007/s10955-025-03525-7},
}

@article{BardosGolseLevermore1,
    author = "Bardos, C. and Golse, F. and Levermore, D.",
    title = "Fluid dynamic limits of kinetic equations. {I}. {F}ormal derivations",
    journal = "J. Statist. Phys.",
    year = "1991",
    volume = "63",
    number = "1-2",
    pages = "323-344"
}

@article{BardosGolseLevermore2,
    author = "Bardos, C. and Golse, F. and Levermore, D.",
    title = "Fluid dynamic limits of kinetic equations. {II}. {C}onvergence proofs for the {B}oltzmann equation",
    journal = "Comm. Pure and Appl. Math.",
    year = "1993",
    volume = "46",
    number = "5",
    pages = "667-753"
}

@article{Caflisch,
    author = "Caflisch, R.",
    title = "The fluid dynamic limit of the nonlinear {B}oltzmann equation",
    journal = "Comm. Pure Appl. Math.",
    year = "1980",
    volume = "33",
    number = "5",
    pages = "651-666"
}

@article {Silvestre_new_regularization,
    AUTHOR = {Silvestre, Luis},
     TITLE = {A new regularization mechanism for the {B}oltzmann equation
              without cut-off},
   JOURNAL = {Comm. Math. Phys.},
  FJOURNAL = {Communications in Mathematical Physics},
    VOLUME = {348},
      YEAR = {2016},
    NUMBER = {1},
     PAGES = {69--100},
      ISSN = {0010-3616,1432-0916},
   MRCLASS = {45K05 (35B65 35Q20)},
  MRNUMBER = {3551261},
MRREVIEWER = {Cecilia\ Cavaterra},
       DOI = {10.1007/s00220-016-2757-x},
       URL = {https://doi.org/10.1007/s00220-016-2757-x},
}

@misc{Chen,
    year = "2023",
    author = "Chen, J.",
    title = "Nearly self-similar blowup of the slightly perturbed homogeneous {L}andau equation with very soft potentials",
    howpublished = "ar{X}iv:2311.11511"
}

@article{HendersonSnelson,
  title = "${C}^\infty$ smoothing for weak solutions of the inhomogeneous {L}andau equation",
  author = "Christopher Henderson and Stanley Snelson",
  journal = "Arch. Ration. Mech. Anal.",
  year= "2020",
  volume = "236",
  number = "1",
  pages = "113-143"
}

@article{HendersonSnelsonTarfulea,
    title = "Local solutions of the {L}andau equation with rough, slowly decaying initial datum",
    author = "Henderson, C. and Snelson, S. and Tarfulea, A.",
    journal = "Ann. Inst. H. Poincar{\'e} C Anal. Non Lin{\'e}aire",
    year = "2020",
    pages = "1345-1377",
    number = "6",
    volume = "37"
}

@article{HendersonSnelsonTarfulea1,
    AUTHOR = {Henderson, Christopher and Snelson, Stanley and Tarfulea,
              Andrei},
     TITLE = {Local existence, lower mass bounds, and a new continuation
              criterion for the {L}andau equation},
   JOURNAL = {J. Differential Equations},
  FJOURNAL = {Journal of Differential Equations},
    VOLUME = {266},
      YEAR = {2019},
    NUMBER = {2-3},
     PAGES = {1536--1577},
      ISSN = {0022-0396,1090-2732},
   MRCLASS = {35Q20 (35A01 35B44)},
  MRNUMBER = {3906224},
       DOI = {10.1016/j.jde.2018.08.005},
       URL = {https://doi.org/10.1016/j.jde.2018.08.005},
}

@article{SnelsonSolomon,
    author = "Snelson, S. and Solomon, C.",
    title = "A continuation criterion for the {L}andau equation with very soft and {C}oulomb potentials",
    journal = "Arch. Ration. Mech. Anal.",
    volume = "250",
    year = "2026" 
}

@article{JangLiuSchrecker,
    author = "Jang, J. and Liu, J. and Schrecker, M.",
    title = "Converging/diverging self-similar shock waves: from collapse to reflection",
    journal = "SIAM J. Math. Anal.",
    year = "2025",
    volume = "57",
    number = "1",
    pages = "190-232"
}

@article{JangLiuSchrecker2,
    author = "Jang, J. and Liu, J. and Schrecker, M.",
    title = "On self-similar converging shock waves",
    journal = "Arch. Ration. Mech. Anal.",
    year = "2025",
    volume = "249",
    number = "3",
    pages = "83 pp."
}

@article{Guderley,
    author = "Guderley, G.",
    title = "Starke kugelige und zylindrische Verdichtungsst{{\"o}}sse in der N{{\"a}}he des Kugelmittelpunktes bzw. der Zylinderachse",
    journal = "Luftfahrtforschung",
    year = "1942",
    volume = "19",
    pages = "302 - 311"
}

@article{MerleRaphaelRodnianskiSzeftel,
    author = "Merle, F. and Rapha{\"{e}}l, P. and Rodnianski, I. and Szeftel, J.",
    title = "On the implosion of a compressible fluid {I}: {S}mooth self-similar inviscid profiles",
    journal = "Ann. of Math. (2)",
    year = "2022",
    number = "2",
    volume = "196",
    pages = "567-778"
}

@article{MerleRaphaelRodnianskiSzeftel2,
    author = "Merle, F. and Rapha{\"{e}}l, P. and Rodnianski, I. and Szeftel, J.",
    title = "On the implosion of a compressible fluid {II}: {S}ingularity formation",
    journal = "Ann. of Math. (2)",
    year = "2022",
    number = "2",
    volume = "196",
    pages = "779-889"
}

@article{CialdeaShkollerVicol,
  title={Classical {E}uler flows generate the strong {G}uderley imploding shock wave},
  author={Cialdea, G. and Shkoller, S. and Vicol, V.},
  journal={arXiv:2510.19688},
  year={2025}
}

@article {Lazarus,
    AUTHOR = {Lazarus, Roger B.},
     TITLE = {Self-similar solutions for converging shocks and collapsing
              cavities},
   JOURNAL = {SIAM J. Numer. Anal.},
  FJOURNAL = {SIAM Journal on Numerical Analysis},
    VOLUME = {18},
      YEAR = {1981},
    NUMBER = {2},
     PAGES = {316--371},
      ISSN = {0036-1429},
   MRCLASS = {76L05},
  MRNUMBER = {612145},
MRREVIEWER = {R.\ M.\ Gundersen},
       DOI = {10.1137/0718022},
       URL = {https://doi.org/10.1137/0718022},
}

@article{BuckmasterCaoGomez,
    AUTHOR = {Buckmaster, Tristan and Cao-Labora, Gonzalo and
              G\'omez-Serrano, Javier},
     TITLE = {Smooth imploding solutions for 3{D} compressible fluids},
   JOURNAL = {Forum Math. Pi},
  FJOURNAL = {Forum of Mathematics. Pi},
    VOLUME = {13},
      YEAR = {2025},
     PAGES = {Paper No. e6, 139},
      ISSN = {2050-5086},
   MRCLASS = {35Q30 (35B35 35Q35 65G30 76N10)},
  MRNUMBER = {4862915},
MRREVIEWER = {Gudrun\ Th\"ater},
       DOI = {10.1017/fmp.2024.12},
       URL = {https://doi.org/10.1017/fmp.2024.12},
}

@article{Chaturvedi,
    author = "Chaturvedi, S.",
    title = "Local existence for the {L}andau equation with hard potentials",
    journal = "SIAM J. Math. Anal.",
    year = "2023",
    volume = "55",
    number = "5",
    pages = "5345-5385"
}

@incollection {Silvestre_Review,
    AUTHOR = {Silvestre, Luis},
     TITLE = {Regularity estimates and open problems in kinetic equations},
 BOOKTITLE = {{{$\rm A^3N^2M$}}: approximation, applications, and analysis
              of nonlocal, nonlinear models},
    SERIES = {IMA Vol. Math. Appl.},
    VOLUME = {165},
     PAGES = {101--148},
 PUBLISHER = {Springer, Cham},
      YEAR = {2023},
      ISBN = {978-3-031-34088-8; 978-3-031-34089-5},
   MRCLASS = {35Q20 (35B65 76N10 76P05)},
  MRNUMBER = {4807233},
       DOI = {10.1007/978-3-031-34089-5\_3},
       URL = {https://doi.org/10.1007/978-3-031-34089-5_3},
}

@article {nash1958continuity,
    AUTHOR = {Nash, J.},
     TITLE = {Continuity of solutions of parabolic and elliptic equations},
   JOURNAL = {Amer. J. Math.},
  FJOURNAL = {American Journal of Mathematics},
    VOLUME = {80},
      YEAR = {1958},
     PAGES = {931--954},
      ISSN = {0002-9327,1080-6377},
   MRCLASS = {35.00},
  MRNUMBER = {100158},
MRREVIEWER = {C.\ B.\ Morrey, Jr.},
       DOI = {10.2307/2372841},
       URL = {https://doi.org/10.2307/2372841},
}

@article {imbert2022global,
    AUTHOR = {Imbert, Cyril and Silvestre, Luis},
     TITLE = {Global regularity estimates for the {B}oltzmann equation
              without cut-off},
   JOURNAL = {J. Amer. Math. Soc.},
  FJOURNAL = {Journal of the American Mathematical Society},
    VOLUME = {35},
      YEAR = {2022},
    NUMBER = {3},
     PAGES = {625--703},
      ISSN = {0894-0347,1088-6834},
   MRCLASS = {35R09 (35Q20)},
  MRNUMBER = {4433077},
       DOI = {10.1090/jams/986},
       URL = {https://doi.org/10.1090/jams/986},
}

@article {imbert2020regularity,
    AUTHOR = {Imbert, Cyril and Silvestre, Luis},
     TITLE = {Regularity for the {B}oltzmann equation conditional to
              macroscopic bounds},
   JOURNAL = {EMS Surv. Math. Sci.},
  FJOURNAL = {EMS Surveys in Mathematical Sciences},
    VOLUME = {7},
      YEAR = {2020},
    NUMBER = {1},
     PAGES = {117--172},
      ISSN = {2308-2151,2308-216X},
   MRCLASS = {35Q20 (35B65 35H10 35R09)},
  MRNUMBER = {4195746},
       DOI = {10.4171/emss/37},
       URL = {https://doi.org/10.4171/emss/37},
}

@article {henderson2020self,
    AUTHOR = {Henderson, Christopher and Snelson, Stanley and Tarfulea,
              Andrei},
     TITLE = {Self-generating lower bounds and continuation for the
              {B}oltzmann equation},
   JOURNAL = {Calc. Var. Partial Differential Equations},
  FJOURNAL = {Calculus of Variations and Partial Differential Equations},
    VOLUME = {59},
      YEAR = {2020},
    NUMBER = {6},
     PAGES = {Paper No. 191, 13},
      ISSN = {0944-2669,1432-0835},
   MRCLASS = {35Q20 (35B60 35B65)},
  MRNUMBER = {4163318},
       DOI = {10.1007/s00526-020-01856-9},
       URL = {https://doi.org/10.1007/s00526-020-01856-9},
}

@article {cao2025non,
    AUTHOR = {Cao-Labora, G. and G\'omez-Serrano, J. and Shi, J. and
              Staffilani, G.},
     TITLE = {Non-radial implosion for compressible {E}uler and
              {N}avier-{S}tokes in {$\mathbb T^3$} and {$\mathbb R^3$}},
   JOURNAL = {Camb. J. Math.},
  FJOURNAL = {Cambridge Journal of Mathematics},
    VOLUME = {13},
      YEAR = {2025},
    NUMBER = {4},
     PAGES = {753--885},
      ISSN = {2168-0930,2168-0949},
   MRCLASS = {35Q30 (35Q31 35R01)},
  MRNUMBER = {5012344},
       DOI = {10.4310/cjm.260107044458},
       URL = {https://doi.org/10.4310/cjm.260107044458},
}

@article {buckmaster2023smooth,
    AUTHOR = {Buckmaster, Tristan and Cao-Labora, Gonzalo and
              G\'omez-Serrano, Javier},
     TITLE = {Smooth self-similar imploding profiles to 3{D} compressible
              {E}uler},
   JOURNAL = {Quart. Appl. Math.},
  FJOURNAL = {Quarterly of Applied Mathematics},
    VOLUME = {81},
      YEAR = {2023},
    NUMBER = {3},
     PAGES = {517--532},
      ISSN = {0033-569X,1552-4485},
   MRCLASS = {76N10 (35Q31)},
  MRNUMBER = {4623212},
       DOI = {10.1090/qam/1661},
       URL = {https://doi.org/10.1090/qam/1661},
}

@article{bedrossian2026finite,
  title={Finite time singularities in the {L}andau equation with very hard potentials},
  author={Bedrossian, Jacob and Chen, Jiajie and Gualdani, Maria Pia and Ji, Sehyun and Vicol, Vlad and Yang, Jincheng},
  journal={arXiv:2602.05981},
  year={2026}
}

@article{shao2025blow,
  title={Blow-up of the 3-D compressible {N}avier-{S}tokes equations for monatomic gases},
  author={Shao, Feng and Wei, Dongyi and Wang, Shumao and Zhang, Zhifei},
  journal={arXiv:2501.15701},
  year={2025}
}

@article {henderson2025decay,
    AUTHOR = {Henderson, Christopher and Snelson, Stanley and Tarfulea,
              Andrei},
     TITLE = {Decay estimates and continuation for the non-cutoff
              {B}oltzmann equation},
   JOURNAL = {Math. Ann.},
  FJOURNAL = {Mathematische Annalen},
    VOLUME = {392},
      YEAR = {2025},
    NUMBER = {4},
     PAGES = {4739--4771},
      ISSN = {0025-5831,1432-1807},
   MRCLASS = {35Q20 (35B40 82C40)},
  MRNUMBER = {4958489},
       DOI = {10.1007/s00208-025-03207-5},
       URL = {https://doi.org/10.1007/s00208-025-03207-5},
}

@article {JenssenTsikkou1,
    AUTHOR = {Jenssen, Helge Kristian and Tsikkou, Charis},
     TITLE = {Amplitude blowup in radial isentropic {E}uler flow},
   JOURNAL = {SIAM J. Appl. Math.},
  FJOURNAL = {SIAM Journal on Applied Mathematics},
    VOLUME = {80},
      YEAR = {2020},
    NUMBER = {6},
     PAGES = {2472--2495},
      ISSN = {0036-1399,1095-712X},
   MRCLASS = {35L45 (35L67 35Q31 76N10)},
  MRNUMBER = {4181105},
       DOI = {10.1137/20M1340241},
       URL = {https://doi.org/10.1137/20M1340241},
}

@article{Jenssen,
      title={Amplitude blowup in compressible {E}uler flows without shock formation}, 
      author={Helge Kristian Jenssen},
      year={2025},
      journal={arXiv:2501.09037}, 
}

@article {serre,
    AUTHOR = {Serre, Denis},
     TITLE = {Divergence-free positive symmetric tensors and fluid dynamics},
   JOURNAL = {Ann. Inst. H. Poincar\'e{} C Anal. Non Lin\'eaire},
  FJOURNAL = {Annales de l'Institut Henri Poincar\'e{} C. Analyse Non
              Lin\'eaire},
    VOLUME = {35},
      YEAR = {2018},
    NUMBER = {5},
     PAGES = {1209--1234},
      ISSN = {0294-1449,1873-1430},
   MRCLASS = {76N10 (35B45 35Q20 35Q31 35Q35 53A45)},
  MRNUMBER = {3813963},
MRREVIEWER = {Giovanni\ Franco\ Crosta},
       DOI = {10.1016/j.anihpc.2017.11.002},
       URL = {https://doi.org/10.1016/j.anihpc.2017.11.002},
}

@article{Hunter,
    title={On the collapse of an empty cavity in water},
    volume={8},
    DOI={10.1017/S0022112060000578},
    number={2},
    journal={Journal of Fluid Mechanics},
    author={Hunter, C.},
    year={1960},
    pages={241–263}
}

@article{chen2026smoothstableeulerimplosions,
      title={Smooth and stable {E}uler implosions}, 
      author={Jiajie Chen and Steve Shkoller and Vlad Vicol},
      year={2026},
      journal={arXiv:2605.00808}
}

\end{document}